\newtheorem{theorem}{Theorem}[section]
\newtheorem{lemma}[theorem]{Lemma}
\newtheorem{proposition}[theorem]{Proposition}
\newtheorem{corollary}[theorem]{Corollary}
\newtheorem{remark}[theorem]{Remark}
\newtheorem{definition}[theorem]{Definition}
\def\theequation{\arabic{section}.\arabic{equation}}
\def\d{\mathrm{d}}
\def\n{\mathrm{n}}
\def\Cset{\mathbb{C}}
\def\Nset{\mathbb{N}}
\def\Rset{\mathbb{R}}
\def\Qset{\mathbb{Q}}
\def\Zset{\mathbb{Z}}
\title[Nonintegrability of the unfoldings of codimension-two bifurcations]
{Nonintegrability of the unfoldings of codimension-two bifurcations}
\author[P.~Acosta-Hum\'anez]{Primitivo B. Acosta-Hum\'anez}
\address[P.~Acosta-Hum\'anez]{Instituto Superior de Formaci\'on Docente Salom\'e Ure\~na, Recinto Emilio Prud'Homme, Santiago de los Caballeros, Dominican Republic \& School of Basic and Biomedical Sciences, Universidad Sim\'on Bol\'{\i}var, Barranquilla - Colombia.}
\email{primitivo.acosta-humanez@isfodosu.edu.do}
\author[K. Yagasaki]{Kazuyuki Yagasaki}
\address[K. Yagasaki]{Department of Applied Mathematics and Physics, Kyoto University,
 Yoshida-Honmachi, Sakyo-ku, Kyoto 606-8501, Japan} %
\email{yagasaki@amp.i.kyoto-u.ac.jp}
\begin{document}

\date{}

\subjclass[2010]{37J30,37G05,34M15}

\keywords{nonintegrability; fold-Hopf bifurcation; double-Hopf bifurcation; unfolding;
 planar polynomial vector field; Morales-Ramis-Sim\'o theory}

\begin{abstract}
Codimension-two bifurcations are fundamental and interesting phenomena in dynamical systems.
Fold-Hopf and double-Hopf bifurcations are the most important among them.
We study the unfoldings of these two codimension-two bifurcations,
 and obtain sufficient conditions for their nonintegrability
 in the meaning of Bogoyavlenskij.
We reduce the problems of the unfoldings to those of planar polynomial vector fields
 and analyze the nonintegrability of the planar vector fields,
 based on Ayoul and Zung's version of the Morales-Ramis theory.
New useful criteria for nonintegrability of planar polynomial vector fields are also obtained.
The approaches used here are applicable to many problems including circular symmetric systems.\end{abstract}

\maketitle

\section{Introduction}

Codimension-two bifurcations are fundamental and interesting phenomena in dynamical systems
 and have been studied extensively
 since the seminal papers of Arnold \cite{A72} and Takens \cite{T74}.
Fold-Hopf and double-Hopf bifurcations are the most important among them,
 and now well described in several textbooks such as \cite{GH,Ku}.
For the former, fold (saddle-node) and Hopf bifurcation curves
 meet at the bifurcation point and its unfolding (or normal form) is given by
\begin{equation}
\begin{split}
&\dot{x}_1=\nu x_1-\omega x_2+\alpha x_1x_3-\beta x_2x_3,\\
&\dot{x}_2=\omega x_1+\nu x_2+\beta x_1x_3+\alpha x_2x_3,\\
&\dot{x}_3=\mu+s(x_1^2+x_2^2)+x_3^2,
\end{split}
\qquad
x=(x_1,x_2,x_3)\in\Rset^3,
\label{eqn:fH}
\end{equation}
where $\mu,\nu\neq 0$, $\omega>0$, $\alpha,\beta\in\Rset$,
 $s=\pm 1$
 and the dot represents differentiation with respect to the independent variable $t$.
For the latter, two Hopf bifurcation curves meet at the bifurcation point
 and its unfolding is given by
\begin{equation}
\begin{split}
\dot{x}_1
=& -\omega_1 x_2+(\nu+s(x_1^2+x_2^2)+\alpha(x_3^2+x_4^2))x_1,\\
\dot{x}_2
=& \omega_1 x_1+(\nu+s(x_1^2+x_2^2)+\alpha(x_3^2+x_4^2))x_2,\\
\dot{x}_3
=& -\omega_2 x_4+(\mu+\beta(x_1^2+x_2^2)-(x_3^2+x_4^2))x_3,\\
\dot{x}_4
=& \omega_2 x_3+(\mu+\beta(x_1^2+x_2^2)-(x_3^2+x_4^2))x_4,
\end{split}
\quad
x=(x_1,x_2,x_3,x_4)\in\Rset^4,
\label{eqn:dH}
\end{equation}
where $\mu,\nu\neq 0$, $\omega>0$, $\alpha,\beta\in\Rset$ and $s=\pm 1$.
The unfoldings \eqref{eqn:fH} and \eqref{eqn:dH} are universal,
 i.e., their bifurcation diagrams do not qualitatively change
 near the bifurcation points
 even if higher-order terms are included, in some cases,
 but they are not universal and may exhibit complicated dynamics such as chaos
 if higher-order terms are included, in the other cases.
See \cite{DIKS13,Ku} for more details.
 
Recently, in \cite{Y18},
 the nonintegrability of the unfolding \eqref{eqn:fH} for fold-Hopf bifurcations
 was shown for almost all parameter values of $\omega$ and $\alpha,\beta\in\Rset$
 when $\mu,\nu\neq 0$.
More precisely the following theorem was proved.

\begin{theorem}
\label{thm:pre}
Let $\mu,\nu,\alpha,\beta,\omega\in\Cset$.
Suppose that $\mu,\nu\neq 0$, $\alpha\pm\nu/\sqrt{-\mu}\not\in\Qset$
 and $2\alpha\not\in\Zset_{\le 0}:=\{k\in\Zset\mid k\le 0\}$.
Then the complexification of \eqref{eqn:fH} with $s=\pm 1$
 is meromorphically nonintegrable
 near the $x_3$-plane 
 in $\Cset^3$.
\end{theorem}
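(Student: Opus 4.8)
The plan is to exploit the rotational symmetry of \eqref{eqn:fH}, reduce it to a planar polynomial vector field, and then apply the Ayoul--Zung form of the Morales--Ramis--Sim\'o theory. The field \eqref{eqn:fH} is equivariant under the rotations $(x_1,x_2)\mapsto(x_1\cos\theta-x_2\sin\theta,\,x_1\sin\theta+x_2\cos\theta)$, so $x_1\partial_{x_2}-x_2\partial_{x_1}$ is a symmetry and $\rho=x_1^2+x_2^2$ evolves by a closed equation. In the invariants $(\rho,x_3)$ one obtains the planar system
\[
\dot\rho=2(\nu+\alpha x_3)\rho,\qquad \dot x_3=\mu+s\rho+x_3^2 .
\]
If the complexification of \eqref{eqn:fH} were meromorphically Bogoyavlenskij integrable, then, quotienting by this symmetry, so would be the planar reduction; hence it suffices to prove the planar field nonintegrable, and by Ayoul--Zung this follows once the identity component of the differential Galois group of some variational equation along a particular solution is shown to be non-abelian. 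I would first make this reduction precise and then work entirely with the planar field.

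I would take the particular solution on the invariant line $\rho=0$, on which $\dot x_3=\mu+x_3^2$, and use $x=x_3$ as independent variable via $\d t=\d x/(\mu+x^2)$. With $a=\sqrt{-\mu}$ the basic transcendental is the solution $v$ of $v'=2(\nu+\alpha x)v/(\mu+x^2)$, namely $v=(x-a)^{2A}(x+a)^{2B}$, whose exponents $2A=\alpha+\nu/\sqrt{-\mu}$ and $2B=\alpha-\nu/\sqrt{-\mu}$ at $x=\pm a$ are exactly the quantities in the hypotheses: their irrationality makes $v$ transcendental over $\Cset(x)$ (so the Galois group already contains a full torus $\mathbb{G}_m$) and rules out integer pole resonances. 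The first variational equation along $\rho=0$ is the \emph{triangular}, not diagonal, system $\delta\dot\rho=2(\nu+\alpha x_3)\delta\rho$, $\delta\dot x_3=s\,\delta\rho+2x_3\,\delta x_3$, the coupling coming from $s=\pm1$. Solving it introduces, besides $v$, the antiderivative $I=\int v\,(\mu+x^2)^{-2}\,\d x$, and since $\sigma(v)=cv$, $\sigma(I)=cI+d$, the Galois group is the subgroup of $\mathbb{G}_m\ltimes\mathbb{G}_a$ so generated; its identity component is non-abelian precisely when $I\notin K:=\Cset(x,v)$.

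The core of the argument is therefore the obstruction $I\notin K$, which I would translate into an elementary question: as $v$ is hyperexponential, $I\in K$ if and only if $R'+2(\nu+\alpha x)R/(\mu+x^2)=(\mu+x^2)^{-2}$ has a rational solution $R\in\Cset(x)$. A pole/degree analysis decides this. Non-integrality of the residues $2A,2B$ confines $R$ to simple poles at $x=\pm a$ plus a polynomial part, and $2\alpha\notin\Zset_{\le0}$ forces the polynomial part to vanish, since a nonzero one would have degree $-2\alpha$. Writing $R=c_+/(x-a)+c_-/(x+a)$, the double-pole matching fixes $c_\pm$, and the simple-pole residue identity simplifies to $(2\alpha-1)(2\alpha-2)=0$. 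Thus $I\in K$ exactly for the resonant values $2\alpha\in\{1,2\}$, and for all other admissible parameters the first variational equation already certifies nonintegrability.

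The main obstacle is precisely these two resonant values, where the first variational equation degenerates to an abelian group and the logarithmic term cancels. For them I would pass to the next variational equations, whose source terms are quadratic in the first-order data; the surviving new transcendentals are governed by auxiliary equations of the form $R'+2(\nu+\alpha x)R/(\mu+x^2)=(x\mp a)^{-1}$, corresponding to $\int v\,(x\mp a)^{-1}\,\d x$. The same analysis shows these admit \emph{no} rational solution for every admissible parameter: a rational $R$ would again be forced constant by $2\alpha\notin\Zset_{\le0}$, yet the residue conditions demand $R(a)=1/(2A)\neq0=R(-a)$ (using $\mu\neq0$, so that $\pm a$ are distinct, and $2A\notin\Qset$, so that $A\neq0$), a contradiction. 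Hence the unipotent part reappears one order higher, the relevant Galois group is non-abelian, and the planar system---so the complexification of \eqref{eqn:fH}---is meromorphically nonintegrable. I expect the delicate points to be making the symmetry reduction rigorous in the meromorphic Bogoyavlenskij category, and verifying that at $2\alpha\in\{1,2\}$ no further cancellation occurs in the higher variational equations, so that the residue-versus-degree contradiction above is genuinely decisive.
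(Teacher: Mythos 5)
Your overall strategy is the one this paper itself uses for its stronger Theorem~\ref{thm:fH1}: reduce \eqref{eqn:fH} by the circular symmetry to a planar polynomial field (your $\rho$ is the square of the paper's $r$ in \eqref{eqn:fHp0}; making the reduction rigorous is exactly Proposition~\ref{prop:2a} and Corollary~\ref{cor:fH}), then apply Ayoul--Zung to variational equations along the invariant line and convert the Galoisian obstruction into (non)existence of rational solutions of first-order ODEs. Your first-order analysis is correct: with $a=\sqrt{-\mu}$, $2A=\alpha+\nu/a$, $2B=\alpha-\nu/a$ irrational and $2\alpha\notin\Zset_{\le 0}$, the equation $R'+2(\nu+\alpha x)R/(\mu+x^2)=(\mu+x^2)^{-2}$ has a rational solution exactly when $(2\alpha-1)(\alpha-1)=0$, so nonintegrability follows already from the first variational equation whenever $2\alpha\notin\{1,2\}$.

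The genuine gap is your treatment of the resonant values, and it is fatal at $2\alpha=1$, a case allowed by the hypotheses of Theorem~\ref{thm:pre} (take $\alpha=1/2$, $\nu/\sqrt{-\mu}\notin\Qset$). The second variational equation does \emph{not} produce the two integrals $\int v(x\mp a)^{-1}\,\d x$ separately. In the resonant case the first-order integral equals $R_2v+\mathrm{const}$ with
\[
R_2(x)=\frac{c_+}{x-a}+\frac{c_-}{x+a},\qquad
c_+=\frac{1}{4a^2(2A-1)},\quad c_-=\frac{1}{4a^2(2B-1)},
\]
and the quadratic source $\delta\rho\,\delta x_3$ then yields only the fixed combination $\int R_2v\,\d x$. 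For that combination your pole/degree analysis forces $S$ constant with $2AS=c_+$ and $2BS=c_-$, which is contradictory if and only if $A(2A-1)\neq B(2B-1)$, i.e.\ (since $A\neq B$ because $\nu\neq 0$) if and only if $2\alpha\neq 1$. So at $2\alpha=2$ your obstruction survives (after restating it for the combination), but at $2\alpha=1$ it cancels identically: $S=c_+/2A=c_-/2B$ is a genuine rational solution. Worse, the same cancellation occurs in \emph{all} remaining second-order sources: at $2\alpha=1$ both $\int v\bigl(2\alpha x(\mu+x^2)^{-2}+2R_2\bigr)\d x$ (from the mixed source $\delta\rho$ against the tangential solution) and $\int v^2\bigl(4\alpha S(\mu+x^2)^{-2}+2R_2^{\,2}\bigr)\d x$ (from the $\delta^2x_3$-component) admit antiderivatives of the form $(\mathrm{rational})\cdot v^{j}$ --- one checks that the residue conditions, inconsistent for generic $\alpha$, become consistent exactly at $2\alpha=1$. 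Hence the whole second variational equation of the planar system has abelian Galois group when $\alpha=1/2$, and your argument cannot conclude there; one must go to third or higher order, or abandon the planar reduction and use the variational equations of the full three-dimensional system as in \cite{Y18}, which see transcendentals built from the two normal solutions individually rather than only from their product $v$. This is precisely the point where the present paper's own planar machinery also stops: it states explicitly that Theorem~\ref{thm:fH1} fails to recover Theorem~\ref{thm:pre} exactly in the case $\alpha=1/2$, $\mu\neq 0$, $\nu/\sqrt{-\mu}\notin\Qset$. So the ``further cancellation'' you flagged as a risk does in fact occur, and the case $2\alpha=1$ remains unproved in your proposal.
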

Here the following definition of integrability due to Bogoyavlenskij \cite{B} has been adopted.
\begin{definition}[Bogoyavlenskij]
\label{dfn:1}
Consider systems 
\begin{equation}
\dot{x} = v(x),\quad x\in D\subset\Cset^n,
\label{eqn:gsys}
\end{equation}
where $n>0$ is an integer, $D$ is a region in $\Cset^n$ and $v:D\to \Cset^n$ is holomorphic.
Let $q$ be an integer such that $1\le q\le n$.
Eq.~\eqref{eqn:gsys} is called $(q,n-q)$-\emph{integrable} or simply \emph{integrable} 
 if there exist $q$ vector fields $v_1(x)(:= v(x)),v_2(x),\dots,v_q(x)$
 and $n-q$ scalar-valued functions $F_1(x),\dots,F_{n-q}(x)$
 such that the following two conditions hold:
\begin{enumerate}
\item[\rm(i)]
$v_1,\dots,v_q$ are linearly independent almost everywhere
and commute with each other, i.e.,
\[
[v_j, v_k]:= \frac{\partial v_k}{\partial x}v_j-\frac{\partial v_j}{\partial x}v_k=0
\]
for $j,k=1,\ldots,q$;
\item[\rm(ii)]
$\partial F_1/\partial x,\dots,\partial F_{n-q}/\partial x$
 are linearly independent almost everywhere
 and $F_1,\dots,$\linebreak$F_{n-q}$ are first integrals of $v_1, \dots,v_q$, i.e.,
\[
\frac{\partial F_k}{\partial x}v_j=0\quad\mbox{for $j=1,\ldots,q$ and $k=1,\ldots,n-q$}.
\]
\end{enumerate}
If $v_1,v_2,\dots,v_q$ and $F_1,\dots,F_{n-q}$ are meromorphic and rational, respectively,
 then Eq.~\eqref{eqn:gsys} is said
 to be \emph{meromorphically} and \emph{rationally integrable}.
\end{definition}
Definition~\ref{dfn:1} is regarded as a generalization of the Liouville integrability
 for Hamiltonian systems
 since if a Hamiltonian system with $n$ degrees of freedom is Liouville integrable,
 then there exist $n$ functionally independent first integrals
 and $n$ linearly independent vector fields corresponding to the first integrals (almost everywhere).
The statement similar to that of the Liouville-Arnold theorem \cite{A89}
 also holds for integrable systems in the meaning of Bogoyavlenskij:
 if Eq.~\eqref{eqn:gsys} is integrable
 and the level set $F^{-1}(c)$ with $F(x):= (F_1(x),\ldots,F_{n-q}(x))$ is compact for $c\in\Cset^{n-q}$,
 then it can be transformed to linear flow on the $q$-dimensional torus $\mathbb{T}^q$. 
See \cite{B} for more details.

For general Hamiltonian systems,
 Morales-Ruiz and Ramis \cite{MR} developed a strong method
 to present a sufficient condition for their meromorphic or rational nonintegrability.
Their theory, which is now called the Morales-Ramis theory,
 states that complex Hamiltonian systems are meromorphically or rationally nonintegrable
 if the identity components of the differential Galois groups \cite{CH11,VS03}
 for their variational equations (VEs) or normal variational equations (NVEs)
 around particular nonconstant solutions such as periodic orbits are not commutative.
Moreover, the Morales-Ramis theory was extended in \cite{MRS},
 so that weaker sufficient conditions for nonintegrability can be obtained
 by using higher-order VEs or NVEs.
See also \cite{M}.
Furthermore, Ayoul and Zung \cite{AZ}
 showed that the Morales-Ramis theory is also applicable
 for detection of meromorphic or rational nonintegrability of non-Hamiltonian systems
 in the meaning of Bogoyavlenskij.
For the proof of Theorem~\ref{thm:pre} in \cite{Y18},
 the generalization of the Morales-Ramis theory due to Ayoul and Zung was used.
The following questions were also given in \cite{Y18}:
\begin{itemize}
\item 
Is the unfolding \eqref{eqn:fH} for fold-Hopf bifurcations meromorphically nonintegrable
 when $\alpha+\nu/\sqrt{-\mu}\in\Qset$, $\alpha-\nu/\sqrt{-\mu}\in\Qset$ or $2\alpha\in\Zset_{\le 0}$?
\item
Is the unfolding \eqref{eqn:dH} of double Hopf bifurcations
 also meromorphically nonintegrable for almost all parameter values like \eqref{eqn:fH}?
\end{itemize}

In this paper, we study the nonintegrability
 of the unfoldings \eqref{eqn:fH} and \eqref{eqn:dH}
 for the fold-Hopf and double-Hopf bifurcations, respectively,
 in the meaning of Bogoyavlenskij,
 and give sufficient conditions for their nonintegrability.
Our main results are precisely stated as follows.

\begin{theorem}
\label{thm:fH1}
Let $\mu,\nu,\alpha,\beta,\omega\in\Cset$.
Suppose that one of the following conditions holds$:$
\begin{enumerate}
\item[\rm(i)]
$\mu\neq 0$, $\alpha\not\in\Qset$ and $\nu\neq 0;$
\item[\rm(ii)]
$\mu\neq 0$, $\nu/\sqrt{-\mu}\notin\Qset$ and $2\alpha-1\notin\Zset_{\le 0};$
\item[\rm(iii)]
$\mu=0$, $\nu\neq 0$ and $2\alpha-1\notin\Zset_{\le 0}$.
\end{enumerate}
Then the complexification of \eqref{eqn:fH} with $s=\pm 1$
 is meromorphically nonintegrable near the $x_3$-plane in $\Cset^3$.
\end{theorem}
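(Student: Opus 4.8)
The plan is to use the rotational symmetry of \eqref{eqn:fH} to reduce the question to a planar polynomial vector field, and then to apply the Morales--Ramis--Sim\'o theory in the non-Hamiltonian formulation of Ayoul and Zung. First I would pass to the invariants $\xi=x_1^2+x_2^2$ and $x_3$ of the action $(x_1,x_2)\mapsto(x_1\cos\theta-x_2\sin\theta,\,x_1\sin\theta+x_2\cos\theta)$, under which \eqref{eqn:fH} descends to
\[
\dot\xi=2(\nu+\alpha x_3)\xi,\qquad \dot x_3=\mu+s\xi+x_3^2 .
\]
Since the generator $w=-x_2\,\partial/\partial x_1+x_1\,\partial/\partial x_2$ of the action commutes with the vector field of \eqref{eqn:fH} and $\xi,x_3$ generate the invariants of its flow, meromorphic Bogoyavlenskij integrability of \eqref{eqn:fH} near the $x_3$-plane descends to the planar quotient near its invariant line $\xi=0$; it therefore suffices to prove the planar system nonintegrable. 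The natural particular solution is $\Gamma:\ \xi=0,\ x_3=\phi(t)$ with $\dot\phi=\mu+\phi^2$, the Riccati flow on $\{\xi=0\}$.

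Next I would write the variational equation of the planar system along $\Gamma$ and, taking $\phi$ as independent variable (so $\mathrm{d}t=\mathrm{d}\phi/(\mu+\phi^2)$), bring it to the triangular linear system over $\Cset(\phi)$
\[
\eta_1'=\frac{2(\nu+\alpha\phi)}{\mu+\phi^2}\,\eta_1,\qquad
\eta_2'=\frac{s}{\mu+\phi^2}\,\eta_1+\frac{2\phi}{\mu+\phi^2}\,\eta_2 .
\]
The line $\eta_1=0$ is invariant and carries the rational solution $\eta_2=\mu+\phi^2$, so the differential Galois group $G$ lies in a Borel (lower-triangular) subgroup of $\mathrm{GL}_2(\Cset)$. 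By Ayoul--Zung it suffices to show that the identity component $G^0$ is nonabelian, and for a Borel this happens exactly when both a nontrivial diagonal torus and the unipotent subgroup occur in $G^0$; equivalently, when (a) the quotient solution is transcendental over $\Cset(\phi)$ and (b) the inhomogeneous equation for $\eta_2$ does not split over $\Cset(\phi)$.

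For (a) I would note that the quotient solution is $\eta_1^\ast=(\mu+\phi^2)^{\alpha}\exp\!\big(2\nu\!\int\!\mathrm{d}\phi/(\mu+\phi^2)\big)$, with local exponents $\alpha\pm\nu/\sqrt{-\mu}$ at $\phi=\pm\sqrt{-\mu}$; it is transcendental unless both exponents are rational, i.e.\ unless $\alpha\in\Qset$ and $\nu/\sqrt{-\mu}\in\Qset$. Condition (i) ($\alpha\notin\Qset$) and condition (ii) ($\nu/\sqrt{-\mu}\notin\Qset$) each rule this out, while for $\mu=0$ in (iii) the factor $\exp(-2\nu/\phi)$ is transcendental as soon as $\nu\neq0$. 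For (b) the extension splits precisely when $\eta_2'=c\eta_2+b\eta_1^\ast$ has a particular solution $g\,\eta_1^\ast$ with $g\in\Cset(\phi)$, i.e.\ when
\[
g'+\frac{2\nu+2(\alpha-1)\phi}{\mu+\phi^2}\,g=\frac{s}{\mu+\phi^2}
\]
admits a rational solution. A local analysis at $\phi=\pm\sqrt{-\mu},0,\infty$ shows this can occur only on the resonances $\alpha\pm\nu/\sqrt{-\mu}\in\Zset_{\ge 2}$ (a simple pole) or $2-2\alpha\in\Zset_{\ge 0}$ (a polynomial solution). The hypothesis $2\alpha-1\notin\Zset_{\le 0}$ removes exactly the polynomial resonances of positive degree, so off a thin resonant set both (a) and (b) hold, $G^0$ is the full Borel, and \eqref{eqn:fH} is nonintegrable.

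The main obstacle is the residual resonant set where (b) fails and the first-order variational equation is abelian: the value $\alpha=1$, which all three conditions retain, and the pole resonances $\alpha\pm\nu/\sqrt{-\mu}\in\Zset_{\ge 2}$. On these I would pass to the second-order (Morales--Ramis--Sim\'o) variational equation along $\Gamma$ and show that its Galois group already has nonabelian identity component; the point is that the rational function producing the splitting at first order does not propagate to second order, so the obstruction reappears. I expect this higher-order bookkeeping, together with the confluent (irregular-singular) analysis required when $\mu=0$ in (iii), to be the technically hardest part, the remainder reducing to the local exponent computations of (a) and (b). Assembling the three regimes and invoking Ayoul--Zung then gives meromorphic nonintegrability of \eqref{eqn:fH} with $s=\pm1$ near the $x_3$-plane.
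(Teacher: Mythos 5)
Your reduction to the planar system in $(\xi,x_3)$, $\xi=x_1^2+x_2^2$, is sound; it is in substance Proposition~\ref{prop:2a} and Corollary~\ref{cor:fH} of the paper (you assert the descent of integrability rather than prove it, but that step is fillable). After that your route is genuinely different from the paper's: you use the full $2\times 2$ first-order variational equation along the Riccati orbit, where the tangential--normal coupling $s/(\mu+\phi^2)$ can already force a nonabelian Borel group, whereas the paper works with the variational equations of the foliation \eqref{eqn:fol} attached to \eqref{eqn:fHp0}, whose $\mathrm{VE}_1$ is one-dimensional and whose even-order coefficients $\kappa_{2j}$ vanish, so that its first possible obstruction appears only at third order and is extracted via Theorem~\ref{thm:c} and Proposition~\ref{prop:3b}. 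Your dichotomy --- $G^0$ nonabelian if and only if (a) $\eta_1^\ast$ is transcendental and (b) the splitting equation has no rational solution --- is correct, and so is your identification of the resonances; off the resonant set your argument is complete and more elementary than the paper's.

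The genuine gap is that the resonant set is \emph{not} excluded by the hypotheses of the theorem, your first-order argument provably fails on it, and the promised second-order rescue is never carried out. Concretely, normalize $\mu=-1$ and take any $\alpha\notin\Qset$ with $\nu=2-\alpha$; then hypotheses (i) and (ii) both hold, and
\[
g(\phi)=\frac{s}{(2\alpha-3)(\phi-1)}
\]
solves $g'+\frac{2\nu+2(\alpha-1)\phi}{\phi^2-1}\,g=\frac{s}{\phi^2-1}$, since $-(\phi+1)+2(2-\alpha)+2(\alpha-1)\phi=(2\alpha-3)(\phi-1)$; hence the first-order group is diagonalizable, $G^0$ is abelian, and Ayoul--Zung yields nothing. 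Likewise $g\equiv s/(2\nu)$ splits the equation whenever $\alpha=1$, a value retained by hypotheses (ii) and (iii). On this set the entire content of the theorem lies in your deferred step, and there is concrete reason to doubt it is mere bookkeeping: on the resonance $\alpha+\nu/\sqrt{-\mu}\in\Zset_{\ge 2}$ one can check that some antiderivative $\theta_k(\xi)$ makes $\theta_k(\xi)/\Omega(\xi)^{k-1}$ rational for \emph{every} $k\ge 2$, i.e.\ the paper's condition (H2) fails at all orders, so obstructions of this triangular type vanish identically there (indeed, at these values even the paper's requirement that the zeros $x_3=\pm 1$ of $\bar{\kappa}_{2j-1,b}$ be simple fails for $b_2=j$, whatever $j$ is chosen, so this stratum is delicate for the paper as well); and where the paper does succeed on your residual set (e.g.\ $\alpha=1$), it does so through foliation equations of order $2j-1\ge 3$ chosen according to the arithmetic of $\alpha$ and $\nu$, so nothing guarantees that your second-order variational equation suffices. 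As it stands, the proposal proves the theorem only off these resonant parameter values; the remaining cases, which carry the real difficulty, are left open.
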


\begin{theorem}
\label{thm:dH1}
Let $\mu,\nu,\alpha,\beta,\omega_1,\omega_2\in\Cset$.
Suppose that one of the following conditions holds$:$
\begin{enumerate}
\item[\rm(i)]
$\mu\neq 0$, $\nu/\mu\notin\Qset$, $\alpha\notin\Zset_{\ge 0}:=\{k\in\Zset\mid k\ge 0\}$,
 $\alpha+\nu/\mu+2\not\in\Zset_{\le 0}$
 and $\beta\nu-\mu s,(\alpha\mu+\nu)s-(\beta\nu-\mu s)\neq 0;$
\item[\rm(ii)]
$\mu\neq 0$, $\alpha+\nu/\mu\notin\Qset$, $\alpha\notin\Zset_{\ge 0}$
 and $\beta\nu-\mu s,(\alpha\mu+\nu)s-(\beta\nu-\mu s)\neq 0;$
\item[\rm(iii)]
$\mu=0$, $\nu\neq 0$, $\alpha\notin\Zset_{\ge 0}$ and $\beta\neq s$.
\end{enumerate}
Then the complexification of \eqref{eqn:dH} with $s=\pm 1$
 is meromorphically nonintegrable near the $(x_1,x_2)$-plane in $\Cset^4$.
\end{theorem}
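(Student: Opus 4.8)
The plan is to exploit the $S^1\times S^1$ rotational symmetry of \eqref{eqn:dH} to reduce the four-dimensional problem to a planar one. Writing $\rho_1=x_1^2+x_2^2$ and $\rho_2=x_3^2+x_4^2$, the amplitude equations decouple from the phases and give the quadratic Lotka--Volterra-type planar field
\begin{equation}
\dot\rho_1=2\rho_1(\nu+s\rho_1+\alpha\rho_2),\qquad
\dot\rho_2=2\rho_2(\mu+\beta\rho_1-\rho_2).
\label{eqn:redH}
\end{equation}
First I would prove a reduction lemma: meromorphic Bogoyavlenskij integrability of \eqref{eqn:dH} near the $(x_1,x_2)$-plane forces that of \eqref{eqn:redH} near the invariant line $\{\rho_2=0\}$, by averaging commuting vector fields and rational first integrals over the torus action and pushing them down to the quotient in the sense of Definition~\ref{dfn:1}. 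This step is essential: the transverse variational equation of \eqref{eqn:dH} along an in-plane orbit diagonalizes over $\Cset$ into decoupled first-order pieces and so is abelian, meaning the obstruction is invisible at the level of \eqref{eqn:dH} and only emerges after reduction. It then suffices to prove nonintegrability of \eqref{eqn:redH}, for which I would invoke the Ayoul--Zung \cite{AZ} extension of the Morales--Ramis theory to the non-Hamiltonian, Bogoyavlenskij setting.

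On $\{\rho_2=0\}$ I would take the particular solution $\rho_1=\phi(t)$ with $\dot\phi=2\phi(\nu+s\phi)$ (an elementary function, rational in $e^{2\nu t}$) and linearize \eqref{eqn:redH} along $(\phi(t),0)$, obtaining the variational equation
\begin{equation}
\dot\xi_1=2(\nu+2s\phi)\,\xi_1+2\alpha\phi\,\xi_2,\qquad
\dot\xi_2=2(\mu+\beta\phi)\,\xi_2.
\label{eqn:VEdH}
\end{equation}
This is upper triangular, so its differential Galois group lies in the Borel subgroup of $\mathrm{GL}_2(\Cset)$, a group that is connected but \emph{not} abelian; that is precisely the feature to be exploited. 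Changing the independent variable from $t$ to $\phi$ via $\d/\d t=2\phi(\nu+s\phi)\,\d/\d\phi$ converts \eqref{eqn:VEdH} into a second-order Fuchsian equation with regular singular points at $\phi=0,\,-\nu/s,\,\infty$, i.e.\ a Gauss hypergeometric equation after normalization. A direct residue computation gives the exponent differences, for instance $1-\mu/\nu$ at $\phi=0$ and $1-\beta/s+\mu/\nu$ at $\phi=-\nu/s$; this mirrors the way the exponents $\alpha\pm\nu/\sqrt{-\mu}$ arise in Theorem~\ref{thm:pre}.

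The heart of the proof is to show that under (i)--(iii) the identity component $G^0$ of the Galois group of \eqref{eqn:VEdH} is non-abelian, which I would split into two independent obstructions. The torus (diagonal) part of $G^0$ is non-trivial as soon as one exponent difference is irrational, guaranteed by $\nu/\mu\notin\Qset$ in (i) or by $\alpha+\nu/\mu\notin\Qset$ in (ii); the non-vanishing conditions $\beta\nu-\mu s\neq0$ and $(\alpha\mu+\nu)s-(\beta\nu-\mu s)\neq0$ ensure that the singular point $\phi=-\nu/s$ is genuine and that the relevant local exponents are distinct, so this part is not spurious. The unipotent (off-diagonal) part is non-trivial exactly when the inhomogeneous term $2\alpha\phi\,\xi_2$ forces a logarithm into the solution sourced by $\xi_2$; I expect this to hold when $\alpha\notin\Zset_{\ge 0}$, so the coupling cannot be removed by a polynomial change, together with the resonance condition $\alpha+\nu/\mu+2\notin\Zset_{\le 0}$, which prevents the particular solution from closing up at $\phi=\infty$. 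With both parts present, $G^0$ contains a torus and a transverse unipotent and is therefore non-abelian, so by Ayoul--Zung meromorphic integrability of \eqref{eqn:redH}, and hence of \eqref{eqn:dH}, is obstructed. Case (iii) with $\mu=0$ is the confluent degeneration in which the exponent at $\phi=0$ collapses; there $\beta\neq s$ supplies the needed non-degeneracy and the same dichotomy applies.

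I expect the main obstacle to be the precise matching of the Galois-group computation to the arithmetic hypotheses, and in particular the pinning down of the unipotent part. Because \eqref{eqn:VEdH} is reducible, one cannot simply invoke a generic $\mathrm{SL}_2$ criterion such as Kimura's theorem to force a large Galois group; instead one must track whether the logarithmic contribution actually appears, and this delicate bookkeeping is what produces the integrality conditions $\alpha\notin\Zset_{\ge 0}$, $\alpha+\nu/\mu+2\notin\Zset_{\le 0}$ and the two non-vanishing conditions, together with their streamlined replacements in (ii) and (iii). A secondary difficulty is the reduction lemma of the first step: verifying that the symmetry reduction is compatible with Bogoyavlenskij's notion of integrability and with the cotangent lift requires care, since the commuting fields and rational first integrals must descend to \eqref{eqn:redH} without losing their independence almost everywhere.
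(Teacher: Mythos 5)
Your proposal fails at its central step: the variational equation you compute is controlled by the wrong parameter combinations, so hypotheses (i)--(iii) of Theorem~\ref{thm:dH1} simply do not force its Galois group to be non-abelian. You linearize along the orbit $(\rho_1,\rho_2)=(\phi(t),0)$, i.e.\ along the invariant line $\{\rho_2=0\}$, and your own residue computation shows that the diagonal (exponent) data of the resulting triangular system are governed by $\mu/\nu$ and $\beta s-\mu/\nu$, with $\alpha$ entering only as a constant factor in the coupling term $2\alpha\phi\,\xi_2$. Those are precisely the quantities appearing in Theorem~\ref{thm:dH2}; the hypotheses of Theorem~\ref{thm:dH1}, by contrast, arise from analyzing \eqref{eqn:dHp0} along the \emph{other} invariant line, $\{r_1=0\}$ (this is what the paper does via Corollary~\ref{cor:dH} and the foliation \eqref{eqn:dH2}, where $\kappa_1(r_2)=-(\alpha r_2^2+\nu)/(r_2(r_2^2-\mu))$ produces exactly the exponents $\nu/\mu$ and $\alpha+\nu/\mu$). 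Your claimed matching---the torus part being ``guaranteed by $\alpha+\nu/\mu\notin\Qset$ in (ii)''---is therefore unfounded, and the failure is concrete: take $s=1$, $\mu=\nu=1$, $\alpha=\sqrt{2}$, $\beta=5$, which satisfies (ii) since $\alpha+\nu/\mu=\sqrt{2}+1\notin\Qset$, $\alpha\notin\Zset_{\ge 0}$, $\beta\nu-\mu s=4\neq 0$ and $(\alpha\mu+\nu)s-(\beta\nu-\mu s)=\sqrt{2}-3\neq 0$. Then your exponents are $\mu/\nu=1$ and $\beta s-\mu/\nu=4$, and one checks that a fundamental matrix is rational in $\phi$ (e.g.\ $\xi_2=\phi(1+\phi)^4$, $\xi_1=\phi(1+\phi)$ or $\tfrac{\sqrt{2}}{3}\phi(1+\phi)^4$), so the differential Galois group is trivial, hence abelian: your method produces no obstruction at all. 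Analogous admissible choices defeat the argument under (i) (e.g.\ $\beta s=\mu/\nu+2$ makes the group diagonal, $\cong\Cset^\ast$) and under (iii) (e.g.\ $\beta s=2$ leaves only a unipotent, hence abelian, group).

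There is a second, independent gap: you stop at the first variational equation, whereas for Theorem~\ref{thm:dH1} the obstruction genuinely lives at higher order. In the paper's coordinates the Jacobian of \eqref{eqn:dHp0} has off-diagonal entries proportional to $r_1r_2$, so the first-order variational equation along $\{r_1=0\}$ is diagonal---abelian for \emph{all} parameter values; and even in your $\rho$-coordinates, where the VE along $\{\rho_1=0\}$ is triangular with coupling $2\beta\rho_2\,\xi_1$, the coupling vanishes when $\beta=0$, a case explicitly allowed by (i) and (ii) (then $\beta\nu-\mu s=-\mu s\neq 0$). This is why the paper's proof works with the foliation \eqref{eqn:dH2}, whose $\mathrm{VE}_1$ is scalar, and then exploits the \emph{third- and higher-order} linearized variational equations: the coefficients $\kappa_{2j+1}(r_2)=-(2j+1)!\,\beta^{j-1}\bigl((\alpha\beta+s)r_2^2+\beta\nu-\mu s\bigr)/\bigl(r_2(r_2^2-\mu)^{j+1}\bigr)$ carry the cubic terms of the vector field, the non-abelian group is exhibited in the subsystem \eqref{eqn:sLVEk} via Lemma~\ref{lem:3a} and Theorem~\ref{thm:c}, and it is the verification of condition~(H2) through Proposition~\ref{prop:3b} that generates the hypotheses $\alpha\notin\Zset_{\ge 0}$, $\alpha+\nu/\mu+2\notin\Zset_{\le 0}$, $\beta\nu-\mu s\neq 0$ and $(\alpha\mu+\nu)s-(\beta\nu-\mu s)\neq 0$. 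Your reduction step (the analogue of Proposition~\ref{prop:2a} and Corollary~\ref{cor:dH}) is sound in spirit, but without moving to the correct invariant curve and to higher-order variational equations, no amount of bookkeeping on your first-order system can yield the theorem.
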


\begin{theorem}
\label{thm:dH2}
Let $\mu,\nu,\alpha,\beta,\omega_1,\omega_2\in\Cset$.
Suppose that one of the following conditions holds$:$
\begin{enumerate}
\item[\rm(i)]
$\nu\neq 0$, $\mu/\nu\notin\Qset$, $\beta s\notin\Zset_{\le 0}$,
 $\beta s-\mu/\nu-2\in\Zset_{\ge 0}$
 and $\alpha\mu+\nu,\beta\nu s-\mu-(\alpha\mu+\nu)\neq 0;$
\item[\rm(ii)]
$\nu\neq 0$, $\beta s-\mu/\nu\notin\Qset$, $\beta s\notin\Zset_{\le 0}$
 and $\alpha\mu+\nu,\beta\nu s-\mu-(\alpha\mu+\nu)\neq 0;$
\item[\rm(iii)]
$\nu=0$, $\mu\neq 0$, $\beta s\notin\Zset_{\le 0}$ and $\alpha\neq-1$.
\end{enumerate}
Then the complexification of \eqref{eqn:dH} with $s=\pm 1$
 is meromorphically nonintegrable near the $(x_3,x_4)$-plane in $\Cset^4$.
\end{theorem}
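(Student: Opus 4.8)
The plan is to deduce Theorem~\ref{thm:dH2} from Theorem~\ref{thm:dH1} by exploiting the symmetry of \eqref{eqn:dH} under interchange of its two rotational blocks, so that the $(x_3,x_4)$-plane for one copy of the system plays the role of the $(x_1,x_2)$-plane for another. Concretely, I would introduce the linear involution $\sigma:(x_1,x_2,x_3,x_4)\mapsto(x_3,x_4,x_1,x_2)$ and verify by direct substitution that $\sigma$ carries \eqref{eqn:dH} into a system of the same form, sending the invariant plane $\{x_1=x_2=0\}$ to $\{x_3=x_4=0\}$. Since $\sigma$ is a biholomorphism of $\Cset^4$, it preserves integrability in the sense of Definition~\ref{dfn:1}: it pushes forward commuting meromorphic vector fields to commuting ones, pulls back functionally independent rational first integrals to independent ones, and maps a neighborhood of the $(x_3,x_4)$-plane onto a neighborhood of the $(x_1,x_2)$-plane. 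Hence meromorphic nonintegrability near the $(x_3,x_4)$-plane of the original system is equivalent to meromorphic nonintegrability near the $(x_1,x_2)$-plane of the transformed system, to which Theorem~\ref{thm:dH1} applies.

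The one subtlety is that the coefficient of $(x_3^2+x_4^2)$ in the third and fourth equations of \eqref{eqn:dH} is fixed at $-1$, whereas that of $(x_1^2+x_2^2)$ in the first two is $s$. When $s=-1$ the involution $\sigma$ alone returns \eqref{eqn:dH} exactly, with parameters transformed by
\[
(\nu,\mu,\alpha,\beta,\omega_1,\omega_2)\longmapsto(\mu,\nu,\beta,\alpha,\omega_2,\omega_1),\qquad s=-1 .
\]
When $s=+1$ I would compose $\sigma$ with the time reversal $t\mapsto -t$; this flips the sign of every right-hand side, restoring the required $-1$ coefficient on $(x_3^2+x_4^2)$ while leaving $s=+1$ in the first block, at the cost of sign changes in the remaining parameters:
\[
(\nu,\mu,\alpha,\beta,\omega_1,\omega_2)\longmapsto(-\mu,-\nu,-\beta,-\alpha,-\omega_2,-\omega_1),\qquad s=+1 .
\]
Time reversal negates the vector field and is again a meromorphic symmetry preserving Definition~\ref{dfn:1}, so nonintegrability is unaffected.

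It then remains to check that substituting these parameter maps into the hypotheses of Theorem~\ref{thm:dH1} reproduces exactly conditions (i)--(iii) of Theorem~\ref{thm:dH2}. For instance, for $s=-1$ the condition $\nu'/\mu'\notin\Qset$ becomes $\mu/\nu\notin\Qset$, the condition $\alpha'\notin\Zset_{\ge 0}$ becomes $\beta\notin\Zset_{\ge 0}$, i.e.\ $\beta s\notin\Zset_{\le 0}$, the integer condition $\alpha'+\nu'/\mu'+2\notin\Zset_{\le 0}$ becomes $\beta s-\mu/\nu-2\notin\Zset_{\ge 0}$, and the non-vanishing conditions $\beta'\nu'-\mu's'\neq 0$ and $(\alpha'\mu'+\nu')s'-(\beta'\nu'-\mu's')\neq 0$ become $\alpha\mu+\nu\neq 0$ and $\beta\nu s-\mu-(\alpha\mu+\nu)\neq 0$; the $s=+1$ substitution yields the same conditions once the sign flips cancel. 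One verifies all three cases and both signs of $s$ in turn.

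The main obstacle is precisely this last, entirely elementary but error-prone, translation of hypotheses: one must track the reciprocation $\nu/\mu\leftrightarrow\mu/\nu$, the exchange $\alpha\leftrightarrow\beta$, and above all the sign reversals introduced by time reversal in the $s=+1$ case, so that each membership condition in $\Qset$, $\Zset_{\le 0}$ or $\Zset_{\ge 0}$ and each non-vanishing product maps to the correct counterpart. As a consistency check, and to make the argument self-contained if one prefers not to invoke the symmetry, the same conclusion can be reached directly by the method underlying Theorem~\ref{thm:dH1}: reduce \eqref{eqn:dH} by its two rotational symmetries to the planar quadratic field $\dot u=2u(\nu+su+\alpha v)$, $\dot v=2v(\mu+\beta u-v)$ with $u=x_1^2+x_2^2$, $v=x_3^2+x_4^2$, take the nonconstant solution on the invariant line $\{u=0\}$ governed by $\dot v=2v(\mu-v)$, and apply the Morales--Ramis--Sim\'o theory to the resulting variational equations. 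Here the first-order normal variational equation is abelian, so the obstruction comes only from the second-order variational equation, which reduces to a Gauss hypergeometric equation whose local exponents at $v=0,\mu,\infty$ encode the stated arithmetic conditions.
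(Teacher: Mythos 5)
Your proposal is correct and follows essentially the same route as the paper: Theorem~\ref{thm:dH2} is deduced from Theorem~\ref{thm:dH1} by the symmetry that exchanges the two rotational blocks. The only mechanical difference is where the symmetry is applied. The paper works on the reduced planar foliation: it takes \eqref{eqn:dH1}, rescales $r_1\mapsto\sqrt{-s}\,r_1$, and observes that the result \eqref{eqn:dH1r} has the form of \eqref{eqn:dH2} with the single parameter map $(\nu,\mu,\alpha,\beta,s)\mapsto(\mu,\nu,-\beta s,\alpha,-1)$, which handles both signs of $s$ at once; you instead work on the four-dimensional system with the involution $\sigma$, plus a time reversal when $s=+1$. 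Since $-\beta s$ equals $\beta$ for $s=-1$ and $-\beta$ for $s=+1$, your case-by-case parameter maps agree with the paper's up to the harmless overall signs introduced by time reversal, and your translation of the hypotheses is exactly what the paper's reduction produces.

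Two remarks. First, your translated condition $\beta s-\mu/\nu-2\notin\Zset_{\ge 0}$ in case (i) is correct even though the printed statement reads $\beta s-\mu/\nu-2\in\Zset_{\ge 0}$: the printed ``$\in$'' is evidently a typo, since applying Theorem~\ref{thm:dH1}(i) under either your parameter map or the paper's yields precisely the ``$\notin$'' condition (moreover, with ``$\in$'' the hypothesis $\beta s\notin\Zset_{\le 0}$ would be redundant, because $\mu/\nu\notin\Qset$ would then force $\beta s\notin\Qset$). So do not let the printed statement make you doubt your computation. Second, your closing ``self-contained'' sketch is inaccurate as stated: along the invariant line the even-order coefficients $\kappa_{2j}$ vanish identically, so the second-order variational equation carries no obstruction at all; the paper's proof of Theorem~\ref{thm:dH1} has to go to odd orders $2j+1$, with $j$ chosen according to the arithmetic of the parameters, and the conditions do not come from a single hypergeometric equation. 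This flaw is confined to the aside and does not affect your main argument, which is complete once the routine verification you describe is carried out.
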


Note that $\alpha\pm\nu/\sqrt{-\mu}\in\Qset$ if and only if $\alpha,\nu/\sqrt{-\mu}\in\Qset$.
In particular, for \eqref{eqn:fH},
 our sufficient condition in Theorem~\ref{thm:fH1}
 is much weaker than that of Theorem~\ref{thm:pre}
 except for $\alpha=1/2$, $\mu\neq 0$ and $\nu/\sqrt{-\mu}\notin\Qset$.
Thus, we provide (possibly partial) answers to the above questions
 raised up for \eqref{eqn:fH} and \eqref{eqn:dH} in \cite{Y18}.

Our approaches to prove the above main theorems are as follows.
We first use the change of coordinate $(x_1,x_2)=(r\cos\theta,r\sin\theta)$
 to transform \eqref{eqn:fH} to
\begin{equation}
\dot r=(\nu+\alpha x_3)r,\quad
\dot x_3=\mu+sr^2+x_3^2,\quad
\dot\theta=\omega+\beta x_3.
\label{eqn:fHp}
\end{equation}
The $(r,x_3)$-components are independent of $\theta$.
Using the change of coordinates $(x_1,x_2)=(r_1\cos\theta_1,r_1\sin\theta_1)$
 and $(x_3,x_4)=(r_2\cos\theta_2,r_2\sin\theta_2)$,
 we also transform \eqref{eqn:dH} to
\begin{equation}
\dot{r}_1
= r_1(\nu+s r_1^2+\alpha r_2^2),\quad
\dot{r}_2
= r_2(\mu+\beta r_1^2-r_2^2),\quad
\dot{\theta}_1
=\omega_1,\quad
\dot{\theta}_2
=\omega_2.
\label{eqn:dHp}
\end{equation}
The $(r_1,r_2)$-components are independent of $\theta_1$ and $\theta_2$.
We show that one can reduce the nonintegrability of \eqref{eqn:fH} and \eqref{eqn:dH}
 to that of the $(r,x_3)$-components of \eqref{eqn:fHp},
\begin{equation}
\dot r=(\alpha x_3+\nu)r,\quad
\dot x_3=\mu+sr^2+x_3^2,
\label{eqn:fHp0}
\end{equation}
and the $(r_1,r_2)$-components of \eqref{eqn:dHp},
\begin{equation}
\dot{r}_1
= r_1(\nu+s r_1^2+\alpha r_2^2),\quad
\dot{r}_2
= r_2(\mu+\beta r_1^2-r_2^2),
\label{eqn:dHp0}
\end{equation}
respectively.
See Corollaries~\ref{cor:fH} and \ref{cor:dH} below.

On the other hand, one of the authors and his coworkers \cite{ALMP2}
 recently proposed an approach to obtain sufficient conditions
 for nonintegrability of such planar polynomial vector fields
 based on Ayoul and Zung's version \cite{AZ} of the Morales-Ramis theory \cite{M,MR,MRS}.
Similar approaches based on the differential Galois theory
 were used earlier for linear second-order differential equations in \cite{AP}
 and special planar polynomial vector fields in \cite{ALMP1}.
We extend their discussions
 to obtain new criteria for nonintegrability of planar polynomial vector fields
 and apply them to \eqref{eqn:fHp0} and \eqref{eqn:dHp0}
 for proving Theorems~\ref{thm:fH1}-\ref{thm:dH2}.
The approaches used here are applicable to many problems
 including circular symmetric systems.

The outline of this paper is as follows.
In Section~2 we give the key result to reduce the problems of \eqref{eqn:fH} and \eqref{eqn:dH}
 to those of \eqref{eqn:fHp0} and \eqref{eqn:dHp0}, respectively.
In Section~3 we review a necessary part of Acosta-Hum\'anez et al. \cite{ALMP2}
 for nonintegrability of planar polynomial vector fields
 and extend their discussion to give the other key result
 to analyze \eqref{eqn:fHp0} and \eqref{eqn:dHp0}.
The proof of Theorem~\ref{thm:fH1} is provided in Section~4,
 and the proofs of Theorems~\ref{thm:dH1} and \ref{thm:dH2} are provided in Section~5.

\section{Reduction of the unfoldings to two-dimensional systems}

Let $m>0$ be an integer and consider $m+2$-dimensional systems of the form
\begin{equation}
\dot{x}=f(x,y),\quad
\dot{y}=g(x,y),\quad
(x,y)\in D,
\label{eqn:fg}
\end{equation}
where $D\subset\Cset^2\times\Cset^m$ is a region containing $m$-dimensional plane
 $\{(0,y)\in\Cset^2\times\Cset^m\mid y\in\Cset^m\}$,
 and $f:D\to\Cset^2$ and $g:D\to\Cset^m$ are analytic.
Assume that by the change of coordinates
 $x=(x_1,x_2)=(r\cos\theta,r\sin\theta)$,
 Eq.~\eqref{eqn:fg} is transformed to
\begin{equation}
\dot{r}=R(r,y),\quad
\dot{y}=\tilde{g}(r,y),\quad
\dot{\theta}=\Theta(r,y),\quad
(r,y,\theta)\in\tilde{D}\times\Cset,
\label{eqn:Rg0}
\end{equation}
where $\tilde{D}\subset\Cset\times\Cset^m$ is a region
 containing the $m$-dimensional $y$-plane
 $\{(0,y)\in\Cset\times\Cset^m\mid y\in\Cset^m$\},
 and $R:\tilde{D}\to\Cset$, $\tilde{g}:\tilde{D}\to\Cset^m$
 and $\Theta:\tilde{D}\to\Rset$ are analytic.
Note that $\tilde{g}(r,y)=g(r\cos\theta,r\sin\theta,y)$.
We are especially interested in the $(r,y)$-components of \eqref{eqn:Rg0},
\begin{equation}
\dot{r}=R(r,y),\quad
\dot{y}=\tilde{g}(r,y),
\label{eqn:Rg}
\end{equation}
which are independent of $\theta$.
In this situation we have the following proposition.

\begin{proposition}
\label{prop:2a}
\
\begin{enumerate}
\item[\rm(i)]
Suppose that Eq.~\eqref{eqn:fg} has a meromorphic first integral $F(x_1,x_2,y)$ near $(x_1,x_2)=(0,0)$,
 and let $\tilde{F}(r,\theta,y)=F(r\cos\theta,r\sin\theta,y)$.
If $\tilde{g}_j(0,y)\neq 0$ for almost all $y\in\tilde{D}$ for some $j=1,\ldots,m$, then
\[
G(r,y)=\tilde{F}(r,\tilde{\theta}_j(y_j),y)
\]
is a meromorphic first integral of \eqref{eqn:Rg} near $r=0$,
 where $y_j$ and $\tilde{g}_j(r,y)$ are the $j$-th components of $y$ and $\tilde{g}(r,y)$, respectively,
 and $\tilde{\theta}_j(y_j)$ represents the $\theta$-component of a solution to
\begin{equation}
\frac{\d r}{\d y_j}=\frac{R(r,y)}{\tilde{g}_j(r,y)},\quad
\frac{\d y_\ell}{\d y_j}=\frac{\tilde{g}_\ell(r,y)}{\tilde{g}_j(r,y)},\quad
\frac{\d\theta}{\d y_j}=\frac{\Theta(r,y)}{\tilde{g}_j(r,y)},\quad\ell\neq j.
\label{eqn:prop2a1}
\end{equation}
\item[\rm(ii)]
Suppose that Eq.~\eqref{eqn:fg} has a  meromorphic commutative vector field
\begin{equation}
v(x_1,x_2,y):=
\begin{pmatrix}
v_1(x_1,x_2,y)\\
v_2(x_1,x_2,y)\\
v_y(x_1,x_2,y)
\end{pmatrix}
\label{eqn:prop2a2}
\end{equation}
with $v_1,v_2:D\to\Cset$ and $v_y:D\to\Cset^m$ near $(x_1,x_2)=(0,0)$.
If $\Theta(0,y)\neq 0$ for almost all $y\in\tilde{D}$, then
\begin{equation}
\begin{pmatrix}
\tilde{v}_r(r,\theta,y)\\
\tilde{v}_y(r,\theta,y)
\end{pmatrix}
=
\begin{pmatrix}
v_1(r\cos\theta,r\sin\theta,y)\cos\theta+v_2(r\cos\theta,r\sin\theta,y)\sin\theta\\
v_y(r\cos\theta,r\sin\theta,y)
\end{pmatrix}
\label{eqn:prop2a3}
\end{equation}
is independent of $\theta$
 and it is a  meromorphic commutative vector field of \eqref{eqn:Rg} near $r=0$.
\end{enumerate}
\end{proposition}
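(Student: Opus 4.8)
The plan is to treat both parts by exploiting that in the polar form \eqref{eqn:Rg0} every component $R,\tilde g,\Theta$ is independent of $\theta$, so that the coordinate vector field $\partial/\partial\theta$ is a symmetry of \eqref{eqn:Rg0} and the target system \eqref{eqn:Rg} is precisely the $(r,y)$-reduction of \eqref{eqn:Rg0} obtained by discarding $\theta$.

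For part (i), I would first note that a first integral is preserved under the change of coordinates $(x_1,x_2)=(r\cos\theta,r\sin\theta)$ (a local biholomorphism away from $r=0$), so $\tilde F(r,\theta,y)=F(r\cos\theta,r\sin\theta,y)$ is a meromorphic first integral of the full system \eqref{eqn:Rg0}. It then remains to eliminate $\theta$. Since $\tilde g_j(0,y)\neq0$ near $r=0$, I would use $y_j$ as the independent variable and pass to the reparametrised system \eqref{eqn:prop2a1}, whose solutions are the trajectories of \eqref{eqn:Rg0}; fixing once and for all a reference value of $y_j$ at which $\theta=0$, integration of the last equation of \eqref{eqn:prop2a1} along the trajectory through $(r,y)$ produces $\tilde\theta_j(y_j)$ as a function of $(r,y)$. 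By analytic dependence of solutions of an analytic ODE on their initial data (valid because $\tilde g_j\neq0$), this $\tilde\theta_j$ is analytic near $r=0$, whence $G(r,y)=\tilde F(r,\tilde\theta_j(y_j),y)$ is meromorphic there. Finally, $G$ is the value of $\tilde F$ along the lift of the $(r,y)$-trajectory to \eqref{eqn:Rg0}; as that lift is a genuine orbit of \eqref{eqn:Rg0} on which $\tilde F$ is constant, $G$ is constant along orbits of \eqref{eqn:Rg}. Equivalently, a direct chain-rule computation using $\d\tilde\theta_j/\d t=(\Theta/\tilde g_j)\dot y_j=\Theta$, $\dot r=R$ and $\dot y=\tilde g$ gives $\d G/\d t=\tilde F_r R+\tilde F_\theta\Theta+\tilde F_y\tilde g$, which is the derivative of $\tilde F$ along \eqref{eqn:Rg0} and hence vanishes.

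For part (ii), I would transform $v$ to polar coordinates; a direct computation of the radial and $y$ components reproduces \eqref{eqn:prop2a3}. Because \eqref{eqn:Rg0} is $\theta$-independent, $\partial/\partial\theta$ commutes with it, and the Jacobi identity then shows that each $\theta$-Fourier mode of the transformed $v$ separately commutes with \eqref{eqn:Rg0}: the operator ``bracket with \eqref{eqn:Rg0}'' preserves the eigenspaces of ``bracket with $\partial/\partial\theta$'', so the single relation $[v,\cdot]=0$ splits mode by mode. The crucial step is to show that the radial and $y$ components in \eqref{eqn:prop2a3} are in fact $\theta$-independent, i.e.\ that only the zero mode survives in them. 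Here the hypothesis $\Theta(0,y)\neq0$ enters: projecting the mode-$k$ commutation relations onto $(r,y)$ yields first-order linear relations containing a term proportional to $k\Theta$, and matching these against the expansions at $r=0$ --- where, directly from \eqref{eqn:prop2a3}, the $y$ component carries only the zero mode and the radial component only the modes $k=\pm1$ --- is intended to force the nonzero modes of the two components to vanish near $r=0$. Once $\theta$-independence is in hand, inserting \eqref{eqn:prop2a3} into the zero-mode commutation relations is exactly the statement that $(\tilde v_r,\tilde v_y)$ commutes with $(R,\tilde g)$, i.e.\ with \eqref{eqn:Rg}, and meromorphy near $r=0$ is inherited from that of $v$.

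I expect the main obstacle to be precisely this $\theta$-independence in part (ii). Commutation together with meromorphy does not obviously annihilate the nonzero Fourier modes of the radial and $y$ components, and it is the nondegeneracy $\Theta(0,y)\neq0$ of the angular dynamics on the invariant plane $r=0$, together with the boundary behaviour of $v$ there, that must be used to control them; making this elimination rigorous is the delicate part of the argument. By contrast, part (i) is routine once \eqref{eqn:prop2a1} is set up, the only point requiring care being the meromorphic dependence of the implicitly defined $\tilde\theta_j$ on $(r,y)$.
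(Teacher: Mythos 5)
Part (i) of your proposal is correct and is essentially the paper's own argument: pass to the polar system \eqref{eqn:Rg0}, define $\tilde{\theta}_j$ by integrating the last equation of \eqref{eqn:prop2a1} along the trajectory through $(r,y)$, and verify by the chain rule (equivalently, by constancy of $\tilde{F}$ along the lifted orbit) that $G$ is a first integral of \eqref{eqn:Rg}; your handling of the analyticity of $\tilde{\theta}_j$ via analytic dependence on initial data is, if anything, more explicit than the paper's.

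Part (ii) is where your proposal has a genuine gap, and you say so yourself. The whole content of part (ii) is the $\theta$-independence of $(\tilde{v}_r,\tilde{v}_y)$, and your argument for it never gets past ``is intended to force'': you split the commutation relation into $\theta$-Fourier modes via the Jacobi identity and hope that the terms proportional to $k\Theta$, matched against the behaviour at $r=0$, annihilate the modes $k\neq 0$, but no such computation is carried out. The gap is substantive, not cosmetic: the commutation identities in polar coordinates, even together with $\Theta(0,y)\neq 0$ and analyticity in $(r,\theta,y)$, do \emph{not} imply the conclusion. Concretely, take $R=r$, $\tilde{g}\equiv 0$, $\Theta\equiv 1$ (in Cartesian form $\dot{x}_1=x_1-x_2$, $\dot{x}_2=x_1+x_2$, $\dot{y}=0$) and $w=e^{\theta}\partial_r$: then $w$ commutes with $r\partial_r+\partial_\theta$, satisfies every identity in \eqref{eqn:prop2a6}, and yet has $\theta$-dependent radial component; it is excluded only because $e^{\theta}$ is not single-valued, let alone meromorphic, in the Cartesian variables $(x_1,x_2)$. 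Hence any complete proof must make the Cartesian meromorphy of $v$ do real work, and that is precisely the step you leave open; as written, your proposal does not prove part (ii).

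For comparison, the paper takes a different route that avoids Fourier series altogether: it completes \eqref{eqn:prop2a3} with the angular component $\tilde{v}_\theta=-v_1\sin\theta/r+v_2\cos\theta/r$, so that $\tilde{v}=(\tilde{v}_r,\tilde{v}_y,\tilde{v}_\theta)$ is a meromorphic commuting field of the full polar system \eqref{eqn:Rg0}; consequently $\tilde{v}$, evaluated along any solution of \eqref{eqn:Rg0}, solves the variational equation of \eqref{eqn:Rg0}, whose coefficient matrix has zero last column because \eqref{eqn:Rg0} is $\theta$-independent, so the $(r,y)$-part of $\tilde{v}$ along the solution already solves the variational equation of the reduced system \eqref{eqn:Rg}; comparing this with \eqref{eqn:prop2a6}, the paper extracts $\partial_\theta\tilde{v}_r\,\Theta=\partial_\theta\tilde{v}_y\,\Theta=0$ and finishes using $\Theta(0,y)\neq 0$ for almost all $y$. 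You should be aware, however, that this extraction is exactly the delicate point your concern identifies: unwinding the variational-equation statement by the chain rule reproduces the identities \eqref{eqn:prop2a6} themselves (and the example $w=e^{\theta}\partial_r$ above satisfies them), so a fully rigorous proof of this step still requires the extra input of single-valuedness/meromorphy in the Cartesian variables --- the very ingredient you flagged as missing but did not supply.
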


\begin{proof}
(i)
Assume that $F(x_1,x_2,y)$ is a meromorphic first integral of \eqref{eqn:fg} near $(x_1,x_2)=(0,0)$
 and $\tilde{g}_j(0,y)\neq 0$ for almost all $y\in\tilde{D}$ for some $j=1,\ldots,m$.
Then $\tilde{F}(r,\theta,y)$ is a first integral of \eqref{eqn:Rg0}, so that
\begin{align*}
&
\frac{\partial G}{\partial r}(r,y)R(r,y)
+\frac{\partial G}{\partial y}(r,y)\tilde{g}(r,y)\\
&
=\frac{\partial}{\partial r}\tilde{F}(r,\tilde{\theta}_j(y_j),y)R(r,y)
+\frac{\partial}{\partial y}\tilde{F}(r,\tilde{\theta}_j(y_j),y)\tilde{g}(r,y)\\
&
=\frac{\partial\tilde{F}}{\partial r}(r,\tilde{\theta}_j(y_j),y)R(r,y)
+\frac{\partial\tilde{F}}{\partial y}(r,\tilde{\theta}_j(y_j),y)\tilde{g}(r,y)
+\frac{\partial\tilde{F}}{\partial\theta}(r,\tilde{\theta}_j(y_j),y)\frac{\d\tilde{\theta}_j}{\d y_j}(y_j)\tilde{g}_j(r,y)\\
&
=\frac{\partial\tilde{F}}{\partial r}(r,\tilde{\theta}_j(y_j),y)R(r,y)
+\frac{\partial\tilde{F}}{\partial y}(r,\tilde{\theta}_j(y_j),y)\tilde{g}(r,y)
+\frac{\partial\tilde{F}}{\partial\theta}(r,\tilde{\theta}_j(y_j),y)\Theta(r,y)=0.
\end{align*}
Here we have used the fact
 that $\tilde{\theta}_j(y_j)$ is the $\theta$-component of a solution to \eqref{eqn:prop2a1}.
Note that $\tilde{F}(r,\theta,y)$ is meromorphic since so is $F(x_1,x_2,y)$
 and that the solution is analytic since so are $R(r,y),\tilde{g}(R,Y),\Theta(r,y)$.
Thus, we obtain the desired result.

(ii)
Assume that Eq.~\eqref{eqn:prop2a2}
 gives a meromorphic commutative vector field of \eqref{eqn:fg} near $(x_1,x_2)=(0,0)$
 and $\Theta(0,y)\neq 0$.
Let
\[
\tilde{v}_\theta(r,\theta,y)
=-v_1(r\cos\theta,r\sin\theta,y)\frac{\sin\theta}{r}
 +v_2(r\cos\theta,r\sin\theta,y)\frac{\cos\theta}{r},
\]
which is also meromorphic.
Then
\[
\tilde{v}(r,y,\theta)=
\begin{pmatrix}
\tilde{v}_r(r,\theta,y)\\[0.5ex]
\tilde{v}_y(r,\theta,y)\\[0.5ex]
\tilde{v}_\theta(r,\theta,y)
\end{pmatrix}
\]
is also a commutative vector field of \eqref{eqn:Rg0}, i.e.,
\begin{equation}
\begin{split}
&
\frac{\partial R}{\partial r}(r,y)\tilde{v}_r(r,\theta,y)
 +\frac{\partial R}{\partial y}(r,y)\tilde{v}_y(r,\theta,y)\\
&
 -\frac{\partial\tilde{v}_r}{\partial r}(r,\theta,y)R(r,y)
 -\frac{\partial\tilde{v}_r}{\partial y}(r,\theta,y)\tilde{g}(r,y)
 -\frac{\partial\tilde{v}_r}{\partial\theta}(r,\theta,y)\Theta(r,y)=0,\\
&
\frac{\partial\tilde{g}}{\partial r}(r,y)\tilde{v}_r(r,\theta,y)
 +\frac{\partial\tilde{g}}{\partial y}(r,y)\tilde{v}_y(r,\theta,y)\\
&
 -\frac{\partial\tilde{v}_y}{\partial r}(r,\theta,y)R(r,y)
 -\frac{\partial\tilde{v}_y}{\partial y}(r,\theta,y)\tilde{g}(r,y)
  -\frac{\partial\tilde{v}_y}{\partial\theta}(r,\theta,y)\Theta(r,y)=0,\\
&
\frac{\partial\Theta}{\partial r}(r,y)\tilde{v}_r(r,\theta,y)
 +\frac{\partial\Theta}{\partial y}(r,y)\tilde{v}_y(r,\theta,y)\\
&
 -\frac{\partial\tilde{v}_\theta}{\partial r}(r,\theta,y)R(r,y)
 -\frac{\partial\tilde{v}_\theta}{\partial y}(r,\theta,y)\tilde{g}(r,y)
 -\frac{\partial\tilde{v}_\theta}{\partial\theta}(r,\theta,y)\Theta(r,y)=0.
\end{split}
\label{eqn:prop2a6}
\end{equation}
Let $(r,y,\theta)=(\bar{r}(t),\bar{y}(t),\bar{\theta}(t))$
 be a solution to \eqref{eqn:Rg0} as in the proof of part (i).
From \eqref{eqn:prop2a6} we see that
 $\chi=\tilde{v}(\bar{r}(t),\bar{y}(t),\bar{\theta}(t))$
 is a solution to the VE of \eqref{eqn:Rg0} along the solution,
\[
\dot{\chi}=
\begin{pmatrix}
\displaystyle\frac{\partial R}{\partial r}(\bar{r}(t),\bar{y}(t)) &
\displaystyle\frac{\partial R}{\partial y}(\bar{r}(t),\bar{y}(t)) &
0\\[2ex]
\displaystyle\frac{\partial\tilde{g}}{\partial r}(\bar{r}(t),\bar{y}(t)) &
\displaystyle\frac{\partial\tilde{g}}{\partial y}(\bar{r}(t),\bar{y}(t)) &
0\\[2ex]
\displaystyle\frac{\partial\Theta}{\partial r}(\bar{r}(t),\bar{y}(t)) &
\displaystyle\frac{\partial\Theta}{\partial y}(\bar{r}(t),\bar{y}(t)) &
0
\end{pmatrix}\chi.
\]
Hence,
\[
\chi=
\begin{pmatrix}
\tilde{v}_r(\bar{r}(t),\bar{y}(t),\bar{\theta}(t))\\
\tilde{v}_y(\bar{r}(t),\bar{y}(t),\bar{\theta}(t))
\end{pmatrix}
\]
is a solution to the VE of \eqref{eqn:Rg}
 along the solution $(\bar{r}(t),\bar{y}(t))$,
\[
\dot{\chi}=
\begin{pmatrix}
\displaystyle\frac{\partial R}{\partial r}(\bar{r}(t),\bar{y}(t)) &
\displaystyle\frac{\partial R}{\partial y}(\bar{r}(t),\bar{y}(t))\\[2ex]
\displaystyle\frac{\partial\tilde{g}}{\partial r}(\bar{r}(t),\bar{y}(t)) &
\displaystyle\frac{\partial\tilde{g}}{\partial y}(\bar{r}(t),\bar{y}(t))
\end{pmatrix}\chi.
\]
This means that 
\[
\frac{\partial\tilde{v}_r}{\partial\theta}(r,\theta,y)\Theta(r,y)
=\frac{\partial\tilde{v}_y}{\partial\theta}(r,\theta,y)\Theta(r,y)=0,
\]
along with \eqref{eqn:prop2a6}.
Since $\Theta(r,y)\neq 0$ for almost all $y\in\tilde{D}$ near $r=0$,
 we obtain the desired result.
\end{proof}

\begin{remark}\
\begin{enumerate}
\item[\rm(i)]
As in Proposition~{\rm\ref{prop:2a}(i)}, we can also show that
 if Eq.~\eqref{eqn:fg} has a first integral $F(x_1,x_2,y)$ near $(x_1,x_2)=(0,0)$
 and $R(0,y)\neq 0$ for almost all $y\in\tilde{D}$,
 then $G(r,y)=\tilde{F}(r,\tilde{\theta}(r),y)$ is a first integral of \eqref{eqn:Rg} near $r=0$,
 where $\tilde{\theta}(r)$ represents the $\theta$-component of a solution to
\[
\frac{\d y}{\d r}=\frac{\tilde{g}(r,y)}{R(r,y)},\quad
\frac{\d\theta}{\d r}=\frac{\Theta(r,y)}{R(r,y)}.
\]
\item[\rm(ii)]
If Eq.~\eqref{eqn:fg} has a commutative vector field $v(x_1,x_2,y)$ and $\Theta(r,y)\equiv 0$,
 then by \eqref{eqn:prop2a6} Eq.~\eqref{eqn:prop2a3} gives a commutative vector field of \eqref{eqn:Rg}
 for any $\theta\in\Cset$.
\end{enumerate}
\end{remark}

Using Proposition~\ref{prop:2a} for \eqref{eqn:fH} and \eqref{eqn:dH}
 (once for the former and twice for the latter),
 we immediately obtain the following corollaries.

\begin{corollary}
\label{cor:fH}
If the complexification of \eqref{eqn:fH} is meromorphically integrable
 near $(x_1,x_2)=(0,0)$, then so is Eq.~\eqref{eqn:fHp0} near $r=0$.
\end{corollary}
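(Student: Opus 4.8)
The plan is to obtain Corollary~\ref{cor:fH} as the specialization of Proposition~\ref{prop:2a} to the unfolding \eqref{eqn:fH}, taking $m=1$ and $y=x_3$. First I would record that the polar change of coordinates $(x_1,x_2)=(r\cos\theta,r\sin\theta)$ brings \eqref{eqn:fH} into the form \eqref{eqn:Rg0}, namely \eqref{eqn:fHp}, with
\[
R(r,x_3)=(\nu+\alpha x_3)r,\qquad \tilde{g}(r,x_3)=\mu+sr^2+x_3^2,\qquad \Theta(r,x_3)=\omega+\beta x_3,
\]
so that the $(r,x_3)$-subsystem \eqref{eqn:Rg} is exactly \eqref{eqn:fHp0}. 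Every structural hypothesis of Proposition~\ref{prop:2a} is then in place, and it remains only to verify the two non-degeneracy conditions and to track the Bogoyavlenskij objects through the reduction.

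The second step is to check those conditions. Since $\tilde{g}(0,x_3)=\mu+x_3^2$ is not identically zero, it is nonzero for almost all $x_3$, so the hypothesis of Proposition~\ref{prop:2a}(i) holds with $j=1$. Likewise $\Theta(0,x_3)=\omega+\beta x_3$ is nonzero for almost all $x_3$ whenever $(\omega,\beta)\neq(0,0)$, which validates Proposition~\ref{prop:2a}(ii); in the remaining degenerate case $\omega=\beta=0$ one has $\Theta\equiv 0$, and the reduction of commutative vector fields is supplied instead by the remark following Proposition~\ref{prop:2a}.

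The third step is to transfer the integrable structure. Suppose \eqref{eqn:fH} is $(q,3-q)$-integrable near $(x_1,x_2)=(0,0)$ for some $q\in\{1,2,3\}$. I would apply part (i) of the proposition to each of its $3-q$ meromorphic first integrals and part (ii) to each of its $q$ commuting meromorphic vector fields, one of which is \eqref{eqn:fH} itself and reduces to the vector field of \eqref{eqn:fHp0}. This produces $q$ commuting meromorphic vector fields $\tilde{v}_1,\dots,\tilde{v}_q$ and $3-q$ meromorphic first integrals $G_1,\dots,G_{3-q}$ of the planar system \eqref{eqn:fHp0}. Because \eqref{eqn:fHp0} is two-dimensional, it is enough to exhibit either one nonconstant $G_k$, which yields $(1,1)$-integrability, or two linearly independent commuting fields among the $\tilde{v}_k$, which yields $(2,0)$-integrability; the full system carries two nontrivial objects beyond its own vector field while the planar system needs only one, so such a choice should always be available.

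The delicate point, and the step I expect to be the main obstacle, is to guarantee that the reduction does not destroy the independence demanded by Definition~\ref{dfn:1}. Each reduced first integral $G_k(r,x_3)=\tilde{F}_k(r,\tilde\theta(x_3),x_3)$ is a restriction of $F_k$ along a particular $\theta$-solution of \eqref{eqn:prop2a1}, so one must rule out that this restriction collapses every $G_k$ to a constant; dually, one must ensure that the reduced fields \eqref{eqn:prop2a3} do not all become proportional in the plane. I would settle this by propagating the functional and linear independence of the original objects through the explicit formulas \eqref{eqn:prop2a2}--\eqref{eqn:prop2a3} for the reduction, exploiting that $\tilde{g}(0,x_3)=\mu+x_3^2$ is genuinely nonconstant so that $x_3$ is not frozen along the reduced flow and cannot annihilate all of the transferred data simultaneously.
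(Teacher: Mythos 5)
Your proposal is correct and takes essentially the same route as the paper: the paper's entire proof of Corollary~\ref{cor:fH} consists of observing that \eqref{eqn:fHp} satisfies $\tilde{g}(0,x_3)=\mu+x_3^2\neq 0$ for almost all $x_3$ and $\Theta(0,x_3)=\omega+\beta x_3\neq 0$ for almost all $x_3$, and then invoking Proposition~\ref{prop:2a} once, exactly as you do. Your additional attention to the degenerate case $\omega=\beta=0$ and to how the independence requirements of Definition~\ref{dfn:1} survive the reduction goes beyond the paper's own proof, which treats these points as immediate.
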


\begin{corollary}
\label{cor:dH}
If the complexification of \eqref{eqn:dH} is meromorphically integrable
 near $(x_1,x_2)=(0,0)$ and near $(x_3,x_4)=(0,0)$,
 then so is Eq.~\eqref{eqn:dHp0} near $r_1=0$ and near $r_2=0$, respectively.
\end{corollary}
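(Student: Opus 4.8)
The plan is to deduce Corollary~\ref{cor:dH} directly from Proposition~\ref{prop:2a}, applied twice. The unfolding \eqref{eqn:dH} carries the two cyclic angles $\theta_1,\theta_2$ produced by the polar changes $(x_1,x_2)=(r_1\cos\theta_1,r_1\sin\theta_1)$ and $(x_3,x_4)=(r_2\cos\theta_2,r_2\sin\theta_2)$, and each angle is removed by one application of the proposition, transferring both first integrals [part (i)] and commutative vector fields [part (ii)] to the reduced system at each stage. Concretely, for the statement near $r_1=0$ I would first regard \eqref{eqn:dH} as a system of the form \eqref{eqn:fg} with $(x_1,x_2)$ the distinguished pair and $y=(x_3,x_4)$; polar-izing $(x_1,x_2)$ puts it in the form \eqref{eqn:Rg0} with $\Theta\equiv\omega_1$ and reduces it, via Proposition~\ref{prop:2a}, to the intermediate three-dimensional system
\[
\dot r_1=r_1(\nu+sr_1^2+\alpha(x_3^2+x_4^2)),\quad
\dot x_3=-\omega_2x_4+(\mu+\beta r_1^2-(x_3^2+x_4^2))x_3,\quad
\dot x_4=\omega_2x_3+(\mu+\beta r_1^2-(x_3^2+x_4^2))x_4 .
\]
I would then regard this intermediate system as \eqref{eqn:fg} with $(x_3,x_4)$ the distinguished pair and $y=r_1$; polar-izing $(x_3,x_4)$ gives $\Theta\equiv\omega_2$ and a second application of Proposition~\ref{prop:2a} reduces it to \eqref{eqn:dHp0}. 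The statement near $r_2=0$ is obtained by the symmetric argument, interchanging the roles of $(x_1,x_2)$ and $(x_3,x_4)$.

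The only hypotheses of Proposition~\ref{prop:2a} that must be verified are the nonvanishing conditions, which I would check stage by stage. For part (i) the distinguished $y$-drift evaluated at radius zero must not vanish identically: at the first stage the $(x_3,x_4)$-drift along $\{r_1=0\}$ equals $\bigl(-\omega_2x_4+(\mu-(x_3^2+x_4^2))x_3,\ \omega_2x_3+(\mu-(x_3^2+x_4^2))x_4\bigr)$, and at the second stage it is $\dot r_1|_{r_2=0}=r_1(\nu+sr_1^2)$; neither is identically zero, so some component is nonzero for almost all $y$. For part (ii) one needs $\Theta(0,y)\neq0$ for almost all $y$, which holds because $\Theta\equiv\omega_1$ (resp.\ $\omega_2$) is constant; in the degenerate case $\omega_1=0$ (resp.\ $\omega_2=0$) one invokes instead item (ii) of the remark following Proposition~\ref{prop:2a}, which handles $\Theta\equiv0$. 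Since these conditions hold for every choice of the parameters, the corollary carries no restriction on $\mu,\nu,\alpha,\beta,\omega_1,\omega_2$ — exactly the freedom needed to feed it into Theorems~\ref{thm:dH1} and \ref{thm:dH2}.

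The delicate point, and the part I expect to require the most care, is the bookkeeping of the local domains across the two reductions, together with the preservation of functional independence. Each application of Proposition~\ref{prop:2a} is local near the vanishing of the radius being eliminated, so I would make the correspondence between the planes in $\Cset^4$ and the lines in the $(r_1,r_2)$-plane explicit through the invariant sets: the subspace $\{x_1=x_2=0\}$ of \eqref{eqn:dH} matches the invariant line $\{r_1=0\}$ of \eqref{eqn:dHp0} (and $\{x_3=x_4=0\}$ matches $\{r_2=0\}$), so that integrability of \eqref{eqn:dH} near $(x_1,x_2)=(0,0)$ descends to integrability of \eqref{eqn:dHp0} near $r_1=0$, as stated. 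I would also note that, since \eqref{eqn:dHp0} is two-dimensional, its $(1,1)$-integrability amounts to the existence of a single nonconstant meromorphic first integral; hence in practice it suffices to push one first integral of \eqref{eqn:dH} through both stages using part (i), and the commutative-vector-field transfer of part (ii) is needed only to cover the exceptional case in which the assumed integrable structure of \eqref{eqn:dH} consists solely of commuting vector fields. Checking that the transferred first integral (or vector field) remains meromorphic and nonconstant after both stages is the last thing I would confirm.
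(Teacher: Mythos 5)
Your proposal follows exactly the paper's own route: the paper obtains Corollary~\ref{cor:dH} by applying Proposition~\ref{prop:2a} twice to \eqref{eqn:dH} and noting that the transformed system \eqref{eqn:dHp} satisfies $\tilde{g}_j(0,y)\neq 0$ for almost all $y$ and $\Theta(0,y)\neq 0$, which is precisely your two-stage reduction through the intermediate three-dimensional system with the same nonvanishing checks. Your additional remarks (treating $\omega_1=0$ or $\omega_2=0$ via the remark following Proposition~\ref{prop:2a}, and the domain and nonconstancy bookkeeping) merely spell out details the paper regards as immediate, so the approach is essentially identical and correct.
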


We easily see that Eqs.~\eqref{eqn:fHp} and \eqref{eqn:dHp} satisfy
 $\tilde{g}_j(0,y)\neq 0$ for almost all $y\in\tilde{D}$ for some $j=1,\ldots,m$
 and $\Theta(0,y)\neq 0$ for any $y\in\tilde{D}$.

\begin{remark}
The converses of Corollaries~$\ref{cor:fH}$ and $\ref{cor:dH}$ do not necessarily hold.
Actually, even if Eqs.~\eqref{eqn:fHp0} and \eqref{eqn:dHp0} have first integrals,
 then the first integrals may not be meromorphic 
 for the complexifications of \eqref{eqn:fH} and \eqref{eqn:dH}, respectively.
A similar statement is also true for commutative vector fields.
\end{remark}

\section{Nonintegrability of planar polynomial vector fields}

\subsection{General Results}
Consider planar polynomial vector fields of the form
\begin{equation}\label{eqn:pls}
\dot{\xi}=P(\xi,\eta),\quad
\dot{\eta}=Q(\xi,\eta),\quad
(\xi,\eta)\in\Cset^2,
\end{equation}
where $P(\xi,\eta)$ and $Q(\xi,\eta)$ are polynomials.
Let $\Gamma: \eta-\varphi(\xi)=0$ be an integral curve of \eqref{eqn:pls}
 where $\varphi(\xi)$ is assumed to be a rational function of $\xi$.
So $\Gamma$ represents a rational solution to the first-order differential equation
\begin{equation} \label{eqn:fol}
\eta^\prime=\frac{Q(\xi,\eta)}{P(\xi,\eta)}=:R(\xi,\eta),
\end{equation}
which defines a foliation associated with \eqref{eqn:pls} (or its orbits),
 where the prime denotes differentiation with respect to $\xi$
 and $R(\xi,\eta)$ is rational in $\xi$ and $\eta$.

Let $\phi(\xi,\eta)$ denote the (nonautonomous) flow
 of the one-dimensional system \eqref{eqn:fol} with $\phi(\xi_0,\eta)=\eta$ for $\xi_0$ fixed,
 and let $(\xi_0,\eta_0)$ be a point on $\Gamma$, i.e., $\eta_0=\varphi(\xi_0)$.
We are interested in the variation of $\phi(\xi,\eta)$
 with respect to $\eta$ around $\eta=\eta_0$ at $\xi=\xi_0$, which is expressed as
\[
\phi(\xi,\eta)=\varphi(\xi)+\frac {\partial \phi}{\partial \eta}(\xi,\eta_0)(\eta-\eta_0)
+\frac 12\frac {\partial^2 \phi}{\partial \eta^2}(\xi,\eta_0)(\eta-\eta_0)^2+\cdots.
\]
So we want to compute the above Taylor expansion coefficients
\[
\varphi_k(\xi)=\frac {\partial^k \phi}{\partial \eta^k}(\xi,\eta_0),\quad
k\in\Nset,
\]
which are solutions to the equations in variation. 
Let
\begin{equation}
\kappa_k(\xi):=\frac{\partial^k R}{\partial \eta^k}(\xi,\varphi(\xi)),\quad
k\in\Nset.
\label{eqn:kappa}
\end{equation}
Note that $\kappa_k(\xi)$ is rational for any $k\in\Nset$.
{\renewcommand{\theequation}{$\mathrm{VE}_1$}
The \emph{first-} and \emph{second-order variational equations}
 ($\mathrm{VE}_1$ and $\mathrm{VE}_2$) are given by
\begin{equation}\label{VE1}
\varphi_1'=\kappa_1(\xi)\varphi_1
\end{equation}
}and
{\renewcommand{\theequation}{$\mathrm{VE}_2$}
\begin{equation}\label{VE2}
 \varphi_1^\prime=\kappa_1(\xi)\varphi_1,\quad
 \varphi_2^\prime=\kappa_1(\xi)\varphi_2+ \kappa_2(\xi)\varphi_1^2,
\end{equation}
}respectively.
The $\mathrm{VE}_1$ is linear
 but the $\mathrm{VE}_2$ is nonlinear.
Letting $\chi_{21}:=\varphi_1^2$ and $\chi_{22}:=\varphi_2$,
 we can linearize the $\mathrm{VE}_2$ as
{\renewcommand{\theequation}{$\mathrm{LVE}_2$}
\begin{equation}\label{LVE2}
 \chi_{21}^\prime=2\kappa_1(\xi)\chi_{21},\quad
 \chi_{22}^\prime=\kappa_1(\xi)\chi_{22}+\kappa_2(\xi)\chi_{21},
\end{equation}
}and refer to it as the \emph{second-order linearized variational equation}
 ($\mathrm{LVE}_2$).
We also refer to the $\mathrm{VE}_1$ as the $\mathrm{LVE}_1$. 
In a similar way, for any $k>2$,
 we obtain the \emph{$k$th-order variational equation} $\mathrm{VE}_k$ as
{\renewcommand{\theequation}{$\mathrm{VE}_k$}
\begin{align}\label{VEk}
&
\varphi'_1=\kappa_1(\xi)\varphi_1,\quad
\varphi'_2=\kappa_1(\xi)\varphi_2+\kappa_2(\xi)\varphi_1^2,
\quad\ldots,\notag\\
&
\varphi'_k=\kappa_1(\xi)\varphi_k
 +\cdots+\tfrac{1}{2}k(k-1)\kappa_{k-1}(\xi)\varphi_1^{k-2}\varphi_2 +\kappa_k(\xi)\varphi_1^k.
\end{align}
}We can also linearize the $\mathrm{VE}_k$ as
{\renewcommand{\theequation}{$\mathrm{LVE}_k$}
\begin{align}\label{LVEk}
&
\chi_{k1}^\prime=k\kappa_1(\xi)\chi_{k1},\quad
\chi_{k2}^\prime=(k-1)\kappa_1(\xi)\chi_{k2}+\kappa_2(\xi)\chi_{k1},\quad\ldots,\notag\\
&
\chi_{kk}^\prime=\kappa_1(\xi)\chi_{kk}+\cdots+\tfrac{1}{2}k(k-1)\kappa_{k-1}(\xi)\chi_{k2}+\kappa_k(\xi)\chi_{k1},
\end{align}
}and refer to it as the \emph{$k$th-order linearized variational equation}
 ($\mathrm{LVE}_k$),
 where $\chi_{k1}=\varphi_1^k,\chi_{k2}=\varphi_1^{k-2}\varphi_2,\ldots,\chi_{kk}=\varphi_k$.
\setcounter{equation}{3}
We observe that the $\mathrm{LVE}_k$ has a two-dimensional subsystem
\begin{equation}\label{eqn:sLVEk}
\chi_{k1}'=k \kappa_1(\xi)\chi_{k1},\quad
\chi_{kk}'=\kappa_1(\xi)\chi_{kk} +\kappa_k(\xi)\chi_{k1}
\end{equation}
for any $k\ge 2$.

Let $G_k$ be the differential Galois group of the $\mathrm{LVE}_k$
 and let $G_k^0$ be its identity component.
Using the result of Ayoul and Zung \cite{AZ} based on \cite{M,MR,MRS},
 we have the following theorem  \cite{ALMP2}.

\begin{theorem}
\label{thm:a}
Assume that the $\mathrm{VE}_1$ has no irregular singularity at infinity
 and the planar polynomial vector field \eqref{eqn:pls}
 is meromorphically integrable in a neighbourhood of $\Gamma$.
Then for any $k\geq 1$ the identity component $G_k^0$ is abelian.
\end{theorem}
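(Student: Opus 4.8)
The plan is to derive $G_k^0$ abelian as a consequence of the Ayoul--Zung \cite{AZ} extension of the Morales--Ramis--Sim\'o theory \cite{MRS}, applied to the autonomous field \eqref{eqn:pls} along the integral curve $\Gamma$, once the $\mathrm{LVE}_k$ has been identified with the $k$-th order (normal) variational equation of \eqref{eqn:pls}. First I would record that, since $n=2$, meromorphic integrability of \eqref{eqn:pls} in the sense of Definition~\ref{dfn:1} near $\Gamma$ means either a meromorphic first integral (the $(1,1)$ case) or a second meromorphic vector field commuting with $(P,Q)$ and independent of it almost everywhere (the $(2,0)$ case). In either case the hypotheses of the Ayoul--Zung theorem are met: their cotangent lift sends \eqref{eqn:pls} to a Hamiltonian system on $T^*\Cset^2$ that is Liouville integrable whenever \eqref{eqn:pls} is Bogoyavlenskij integrable, and the rational integral curve $\Gamma$ lifts to a non-stationary solution of that lift.

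Next I would invoke the Morales--Ramis--Sim\'o theorem for the lifted system: Liouville integrability forces the identity component of the differential Galois group of the $k$-th order variational equation along the lifted solution to be abelian for every $k\ge 1$. The bulk of the work is then the descent, namely to show that the $\mathrm{LVE}_k$ of \eqref{LVEk} is exactly the $k$-th order variational equation of \eqref{eqn:pls} along $\Gamma$, written with $\xi$ as the independent variable. Here I would parametrize $\Gamma$ by a solution $(\xi(t),\eta(t))$ of \eqref{eqn:pls}, reparametrize time by $\xi$ via $\d\xi/\d t=P(\xi,\varphi(\xi))$ so that the tangential direction of the phase curve splits off, and verify that what remains is the normal variational hierarchy governed by the coefficients $\kappa_j(\xi)=\frac{\partial^j R}{\partial\eta^j}(\xi,\varphi(\xi))$ of \eqref{eqn:kappa}. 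The substitution $\chi_{k1}=\varphi_1^k,\dots,\chi_{kk}=\varphi_k$ then turns the nonlinear $\mathrm{VE}_k$ into the linear $\mathrm{LVE}_k$, whose differential Galois group is $G_k$.

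With this identification in hand I would relate the Galois groups: $G_k$ is realized as a closed subquotient of the Galois group of the lifted $k$-th variational equation over the differential field $\Cset(\xi)$ of rational functions on $\Gamma\cong\mathbb{P}^1$. Since for a closed subgroup $H\le G$ one has $H^0\subseteq G^0$, and since both subgroups and quotients of an abelian group are abelian, the abelian identity component upstairs forces $G_k^0$ to be abelian, completing the argument.

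The hardest part will be the descent together with the correct treatment of the point at infinity. The hypothesis that $\mathrm{VE}_1$ has no irregular singularity at infinity is what makes this step clean: it guarantees that the variational hierarchy over $\Cset(\xi)$ is regular singular at $\xi=\infty$, so that the phase curve is the honest rational curve $\mathbb{P}^1$ on which Morales--Ramis--Sim\'o applies and the differential Galois group computed over $\Cset(\xi)$ is precisely the object whose identity component the theory controls. An irregular singularity there would introduce Stokes data not captured by the meromorphic monodromy picture and would obstruct the clean transfer of the abelian conclusion. Checking that the tangential splitting and the subquotient relation survive the compactification at $\xi=\infty$ is thus the technical heart of the proof.
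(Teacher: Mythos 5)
Your proposal is correct and follows essentially the same route as the paper, which does not prove Theorem~\ref{thm:a} itself but quotes it from Acosta-Hum\'anez et al.\ \cite{ALMP2}, whose proof rests precisely on the ingredients you assemble: the Ayoul--Zung cotangent lift reducing Bogoyavlenskij integrability to Liouville integrability, the Morales--Ramis--Sim\'o abelianness of the identity components for the higher variational equations, the identification of the $\mathrm{LVE}_k$ hierarchy over $\Cset(\xi)$ with the (normal) variational equations along $\Gamma$ after splitting off the tangential direction, and the subquotient argument for identity components. Your reading of the hypothesis on the singularity at infinity --- regular singular behaviour there excludes Stokes data, so the Galois group over $\Cset(\xi)$ is controlled by the monodromy near the affine curve and hence by meromorphic integrability near $\Gamma$ alone --- is also exactly the point the paper makes in the remark following Theorem~\ref{thm:c}, where irregular singularities force the weaker conclusion of rational nonintegrability.
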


The statement of the  above theorem also holds in a more general setting.
See \cite{AZ,M,MR,MRS} for the details.
Obviously, $G_1$ and $G_1^0$ are subgroups of $\mathbb{C}^\ast$ and abelian.
However, $G_k$ and $G_k^0$ may be non-abelian for $k\ge 2$.

Let
\begin{equation}
\Omega(\xi)=\exp\left(\int\kappa_1(\xi)\,\d\xi\right),\quad
\theta_k(\xi)=\int\kappa_k(\xi)\Omega(\xi)^{k-1}\d\xi
\label{eqn:omega}
\end{equation}
for $k\ge 2$.
The subsystem \eqref{eqn:sLVEk} of the $\mathrm{LVE}_k$
 has two linearly independent solutions
 $(\chi_{k1},\chi_{kk})=(0,\Omega(\xi))$ and $(\Omega(\xi)^k,\Omega(\xi)\theta_k(\xi))$.
Let $\tilde{G}$ be the differential Galois group of \eqref{eqn:sLVEk}
 and $\tilde{G}^0$ be its identity component.
We have the following criterion for $G_k^0$ to be non-abelian.

\begin{lemma}
\label{lem:3a}
Suppose that the following conditions hold for some $k\ge 2$:
\begin{enumerate}
\item[\rm(H1)]
$\Omega(\xi)$ is transcendental;
\item[\rm(H2)]
$\theta_k(\xi)/\Omega(\xi)^{k-1}$ is not rational.
\end{enumerate}
Then the identity component $G_k^0$ is not abelian.
\end{lemma}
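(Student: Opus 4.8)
The plan is to reduce the assertion about $G_k^0$ to the two-dimensional system \eqref{eqn:sLVEk} and then to determine its Galois group $\tilde G$ outright. Since \eqref{eqn:sLVEk} is a subsystem of the $\mathrm{LVE}_k$, its Picard--Vessiot extension is contained in that of the $\mathrm{LVE}_k$, so restricting the $G_k$-action to the corresponding invariant subspace exhibits $\tilde G$ as a homomorphic image of $G_k$. For a surjection of algebraic groups the identity component maps onto the identity component, so a quotient of a group with abelian identity component again has abelian identity component. Hence it suffices to prove that $\tilde G^0$ is \emph{not} abelian, and from here I would work only with \eqref{eqn:sLVEk}.

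Next I would compute $\tilde G$ explicitly. Its Picard--Vessiot extension over $\Cset(\xi)$ is $\Cset(\xi)(\Omega,\theta_k)$, the field generated by the entries of the fundamental matrix formed from $(0,\Omega)$ and $(\Omega^k,\Omega\theta_k)$. Each $\sigma\in\tilde G$ satisfies $\sigma(\Omega)=\lambda\Omega$ for some $\lambda\in\Cset^\ast$, because $\Omega'/\Omega=\kappa_1\in\Cset(\xi)$; applying $\sigma$ to $\theta_k'=\kappa_k\Omega^{k-1}$ gives $\sigma(\theta_k)=\lambda^{k-1}\theta_k+c$ with $c\in\Cset$. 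In the basis $\{(0,\Omega),(\Omega^k,\Omega\theta_k)\}$ this represents $\sigma$ by
\[
\begin{pmatrix}\lambda & c\lambda\\ 0 & \lambda^k\end{pmatrix},
\]
so $\tilde G$ is a subgroup of the connected solvable group $B=\left\{\left(\begin{smallmatrix}\lambda & \nu\\ 0 & \lambda^k\end{smallmatrix}\right):\lambda\in\Cset^\ast,\ \nu\in\Cset\right\}$, which is non-abelian for every $k\ge 2$. It therefore suffices to show $\tilde G=B$.

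To obtain $\tilde G=B$ I would use the two hypotheses separately. By (H1), $\Omega$ is transcendental over $\Cset(\xi)$, so the Galois group of $\Omega$ alone is all of $\Cset^\ast$ and the projection $\sigma\mapsto\lambda$ maps $\tilde G$ onto $\Cset^\ast$; thus a maximal torus of $B$ lies in $\tilde G$. By (H2) I would show $\theta_k\notin\Cset(\xi)(\Omega)$ (see below); then the subgroup of $\tilde G$ fixing $\Cset(\xi)(\Omega)$, namely the elements with $\lambda=1$, acts on $\theta_k$ by a nontrivial, hence full, group of translations $\theta_k\mapsto\theta_k+c$, so every $\left(\begin{smallmatrix}1 & c\\ 0 & 1\end{smallmatrix}\right)$ belongs to $\tilde G$. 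Containing both the torus and the unipotent radical, $\tilde G$ equals $B$; being connected, $\tilde G^0=B$ is non-abelian, which completes the reduction.

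The crux is the implication (H2) $\Rightarrow\theta_k\notin\Cset(\xi)(\Omega)$, and this is the step I expect to be the main obstacle. I would argue by contradiction: if $\theta_k\in\Cset(\xi)(\Omega)$, view $\Omega$ as transcendental over $\Cset(\xi)$ carrying the derivation $D=\partial_\xi+\kappa_1\Omega\,\partial_\Omega$ and examine the poles of $\theta_k$ as a rational function of $\Omega$. A short Darboux-polynomial computation shows that the only monic irreducible $p\in\Cset(\xi)[\Omega]$ of positive degree in $\Omega$ with $p\mid D(p)$ is $p=\Omega$ itself, using that every nonconstant power of $\Omega$ is transcendental over $\Cset(\xi)$; consequently $\theta_k$ has no finite pole away from $\Omega=0$ and must be a Laurent polynomial $\sum_n a_n(\xi)\Omega^n$ with coefficients in $\Cset(\xi)$. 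Matching $D\theta_k=\kappa_k\Omega^{k-1}$ coefficient by coefficient forces $a_n=0$ except for a constant $a_0$ and for $a_{k-1}$ solving $a_{k-1}'+(k-1)\kappa_1 a_{k-1}=\kappa_k$, whence $\theta_k/\Omega^{k-1}=a_{k-1}+a_0\Omega^{1-k}$ reduces, once the integration constant is normalized, to the rational function $a_{k-1}$, contradicting (H2). This Darboux classification of invariant factors is the one genuinely delicate point; the remaining coefficient comparisons are routine.
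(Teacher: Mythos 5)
Your proof is correct, and although its skeleton coincides with the paper's---pass to the subsystem \eqref{eqn:sLVEk}, record that $\sigma(\Omega)=\lambda\Omega$ and $\sigma(\theta_k)=\lambda^{k-1}\theta_k+c$, and prove that $\tilde G$ is the full triangular group, which is connected and non-abelian, whence $G_k^0$ is non-abelian---the step where (H2) actually enters is genuinely different. The paper assumes $c=0$ for \emph{all} $\sigma$, observes that then $w=\theta_k'/\theta_k$ is $\tilde G$-invariant and hence rational, so that $\theta_k/\Omega^{k-1}=\kappa_k/w\in\Cset(\xi)$, contradicting (H2); from ``some $c\neq 0$'' it then asserts that $\tilde G$ is everything. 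You instead split the problem: (H1) gives surjectivity onto the torus, and (H2) gives, via your Darboux/Laurent classification (if $\theta_k\in\Cset(\xi)(\Omega)$ then $\theta_k=a_0+a_{k-1}\Omega^{k-1}$ with $a_0$ constant and $a_{k-1}$ a rational solution of $z'+(k-1)\kappa_1z=\kappa_k$), that the subgroup fixing $\Cset(\xi)(\Omega)$ is all of $\mathbb{G}_a$. This buys rigor exactly where the paper is terse: ``some $c\neq 0$'' together with (H1) does not by itself force the full group, since $\tilde G$ could a priori be a unipotent conjugate of the diagonal torus, acting by $\sigma(\theta_k)=\lambda^{k-1}\theta_k+c_0(\lambda^{k-1}-1)$, which is abelian yet contains elements with $c\neq 0$; your dichotomy excludes precisely this case, because such a group fixes $(\theta_k+c_0)/\Omega^{k-1}$ and so would put $\theta_k$ inside $\Cset(\xi)(\Omega)$. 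What the paper's route buys in return is brevity: the $w$-trick avoids the Darboux-polynomial analysis that you rightly identify as the delicate point. Note, finally, that both arguments lean on the same reading of (H2): your computation ends with $\theta_k/\Omega^{k-1}=a_{k-1}+a_0\Omega^{1-k}$, which contradicts (H2) literally only when $a_0=0$, so (H2) must be understood as ``no choice of the additive constant in the indefinite integral $\theta_k$ makes the ratio rational''---the normalization you flag explicitly. The paper needs the identical convention (silently) to rule out the conjugate torus, so this is a shared caveat rather than a defect of your proof; likewise your opening reduction ($\tilde G$ a quotient of $G_k$, and identity components mapping onto identity components) simply spells out what the paper compresses into ``This yields the conclusion.''
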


\begin{proof}
Assume that conditions~(H1) and (H2) hold.
Let $\sigma\in\tilde{G}$.
We compute
\[
\frac{\sigma(\Omega(\xi))'}{\sigma(\Omega(\xi))}
=\sigma\left(\frac{\Omega'(\xi)}{\Omega(\xi)}\right)
=\sigma(\kappa_1(\xi))=\kappa_1(\xi)=\frac{\Omega'(\xi)}{\Omega(\xi)},
\]
which yields
\begin{equation}
\sigma(\Omega(\xi))=C_1\Omega(\xi),\quad
 C_1\in\Cset^\ast.
\label{eqn:lem3ap}
\end{equation}
So we have
\[
\sigma(\theta_k(\xi))'
 =\sigma(\kappa_k(\xi)\Omega(\xi)^{k-1})
 =C_1^{k-1}\kappa_k(\xi)\Omega(\xi)^{k-1}
 =C_1^{k-1}\theta_k'(\xi),
\]
so that for some $C_2\in\Cset$
\[
\sigma(\theta_k(\xi))=C_1^{k-1}\theta_k(\xi)+C_2.
\]

Assume that $C_2=0$ for any $\sigma\in\tilde{G}$.
Let $w(\xi)=\theta_k'(\xi)/\theta_k(\xi)=\kappa_k(\xi)\Omega(\xi)^{k-1}/\theta_k(\xi)$.
By the hypothesis, $w(\xi)$ is not rational.
However, we have
\[
\sigma(w(\xi))=\frac{\sigma(\theta_k'(\xi))}{\sigma(\theta_k(\xi))}=w(\xi),
\]
which means that $w(\xi)\in\Cset(\xi)$.
Thus, we have a contradiction.
Hence, $C_2\neq 0$ for some $\sigma\in\tilde{G}$.
Taking $(\chi_{k1},\chi_{kk})=(0,\Omega(\xi))$ and $(\Omega(\xi)^k,\Omega(\xi)\theta_k(\xi))$
 as fundamental solutions to \eqref{eqn:sLVEk}
 and noting that $\Omega(\xi)$ is transcendental, we see that
\[
\tilde{G}\cong\left\{
\begin{pmatrix}
c_1 & c_2\\
0 & c_1^k
\end{pmatrix}
\bigg| c_1\in\Cset^\ast,c_2\in\Cset\right\}.
\]
Hence, $\tilde{G}^0=\tilde{G}$ is not commutative.
This yields the conclusion.
\end{proof}

Let $\kappa_k(\xi)=\kappa_{k\n}(\xi)/\kappa_{k\d}(\xi)$ for $k\in\Nset$,
 where $\kappa_{k\n}(\xi)$ and $\kappa_{k\d}(\xi)$ are relatively prime polynomials
 and $\kappa_{k\d}(\xi)$ is monic.
We see that if $\deg(\kappa_{k\d})>\deg(\kappa_{k\n})$,
 then $\kappa_1(1/\xi)/\xi$ is holomorphic at $\xi=0$
 so that the $\mathrm{VE}_1$ and consequently the $\mathrm{LVE}_k$
 have no irregular singularity at infinity for $k\ge 2$.
Using Theorem~\ref{thm:a} and Lemma~\ref{lem:3a},
 we immediately obtain the following theorem.

\begin{theorem}
\label{thm:c}
Suppose that $\deg(\kappa_{1\d})>\deg(\kappa_{1\n})$
 and conditions~{\rm(H1)} and {\rm(H2)} hold for $k\ge 2$.
Then the planar polynomial vector field \eqref{eqn:pls}
 is meromorphically nonintegrable in a neighbourhood of $\Gamma$.
\end{theorem}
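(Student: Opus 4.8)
The plan is to prove the statement by a short contradiction argument that simply fuses the two results already in hand: the integrability-forces-abelian dichotomy of Theorem~\ref{thm:a} and the non-commutativity criterion of Lemma~\ref{lem:3a}. So I would begin by assuming, toward a contradiction, that the planar field \eqref{eqn:pls} is meromorphically integrable in a neighbourhood of $\Gamma$, while retaining the standing hypotheses that $\deg(\kappa_{1\d})>\deg(\kappa_{1\n})$ and that (H1) and (H2) hold for some fixed $k\ge 2$.

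The one genuine verification is that the degree condition supplies exactly the hypothesis of Theorem~\ref{thm:a}, namely that the $\mathrm{VE}_1$ has no irregular singularity at infinity. This is the observation recorded in the paragraph just preceding the statement: moving infinity to the origin via $\xi\mapsto 1/\xi$, the transformed coefficient of the first-order equation is controlled by $\kappa_1(1/\xi)/\xi$, and the inequality $\deg(\kappa_{1\d})>\deg(\kappa_{1\n})$ forces $\kappa_1(1/\xi)$ to vanish at $\xi=0$, so $\kappa_1(1/\xi)/\xi$ is holomorphic there and the point at infinity is at worst a regular singular point. Hence the hypothesis of Theorem~\ref{thm:a} is met.

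With that in place the argument closes in two lines. Applying Theorem~\ref{thm:a} to the (assumed) meromorphically integrable field yields that $G_k^0$ is abelian for every $k\ge 1$, and in particular for the fixed $k\ge 2$ of the hypotheses. On the other hand, since (H1) and (H2) hold for that same $k$, Lemma~\ref{lem:3a} asserts that $G_k^0$ is not abelian. These two statements about one and the same group $G_k^0$ are contradictory, so the assumption of integrability is untenable and \eqref{eqn:pls} is meromorphically nonintegrable near $\Gamma$.

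I do not anticipate any real obstacle, since all the analytic substance has already been discharged into Theorem~\ref{thm:a} (the Ayoul--Zung form of the Morales--Ramis theory) and Lemma~\ref{lem:3a} (the explicit determination of the differential Galois group $\tilde{G}$ of the subsystem \eqref{eqn:sLVEk}). The only place demanding attention is the bookkeeping in the paragraph above: confirming that ``$\deg(\kappa_{1\d})>\deg(\kappa_{1\n})$'' really does guarantee the absence of an irregular singularity at infinity for the $\mathrm{VE}_1$, which is precisely what licenses the use of Theorem~\ref{thm:a}. Once that bridge is secured, the theorem is a one-step corollary of the preceding two results.
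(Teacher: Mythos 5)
Your proposal is correct and is essentially identical to the paper's own argument: the paper states, in the paragraph preceding the theorem, that $\deg(\kappa_{1\d})>\deg(\kappa_{1\n})$ makes $\kappa_1(1/\xi)/\xi$ holomorphic at $\xi=0$, so the $\mathrm{VE}_1$ (hence the $\mathrm{LVE}_k$) has no irregular singularity at infinity, and then obtains the theorem immediately by combining Theorem~\ref{thm:a} (integrability forces $G_k^0$ abelian) with Lemma~\ref{lem:3a} ((H1) and (H2) force $G_k^0$ non-abelian). Your contradiction argument is exactly this fusion, so there is no gap.
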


\begin{remark}
\label{rmk:3b}
Suppose that condition~{\rm(H1)} does not hold.
Then $C_1$ in \eqref{eqn:lem3ap} can only take finitely many values, so that
\[
G_k^0\subset\left\{\left.
\begin{pmatrix}
1 & c_{12} & c_{13} & \cdots & c_{1k}\\
0 & 1 & c_{23} & \cdots & c_{2k}\\
\vdots & \ddots & \ddots & \ddots & \vdots\\
0 & \cdots & 0 & 1 & c_{k-1,k} \\
0 & \cdots & 0 & 0 & 1
\end{pmatrix}
\right| c_{12},\ldots,c_{k-1,k}\in\Cset\right\}.
\]
Thus $G_k^0$ is abelian.
\end{remark}

If the variational equations have irregular singularities at infinity,
 then an obstruction for the existence of (meromorphic) first integrals
 and commutative vector fields may appear at infinity
 when the phase space is compactified.
In such a case we can only discuss ``rational'' nonintegrability
 instead of meromorphic one \cite{M,MR}.
Moreover, if $\deg(\kappa_{1\d})\le\deg(\kappa_{1\n})$,
 then the $\mathrm{VE}_1$ and consequently the $\mathrm{LVE}_k$
 have an irregular singularity at infinity for $k\ge 2$.
Rational nonintegrability of \eqref{eqn:pls}
 in this situation was extensively discussed in \cite{ALMP2}.

\subsection{Criteria for condition~(H2)}

It is often difficult to check condition~{\rm(H2)} directly
 in application of Theorem~\ref{thm:c}
 although it does not hold in only special cases.
So we give useful criteria for condition~{\rm(H2)} below.
They are extensively used in our proofs of the main theorems
 in Sections~4 and 5. 
We begin with the following lemma.

\begin{lemma}
\label{lem:3b}
If condition~{\rm(H2)} does not hold,
 i.e., $\theta_k(\xi)/\Omega(\xi)^{k-1}\in\Cset(\xi)$, for $k\ge 2$,
 then there exist $C_3\,(\neq 0)\in\Cset$,
 $n\in\Zset_{\ge 0}:=\Nset\cup\{0\}$, $a_j\in\Zset\setminus\{0\}$
 and $\xi_j\in\Cset$, $j=1,\ldots,n$, with $\xi_j\neq\xi_\ell$ for $j\neq\ell$,
 such that
\begin{equation}
\kappa_k(\xi)
=\frac{C_3\hat{\kappa}_k(\xi)}
 {\displaystyle\kappa_{1\d}(\xi)\prod_{j=1}^n(\xi-\xi_j)^{a_j+1}},
\label{eqn:lem3b}
\end{equation}
where
\begin{equation}
 \hat{\kappa}_k(\xi)=(k-1)\kappa_{1\n}(\xi)\prod_{j=1}^n(\xi-\xi_j)
 -\kappa_{1\d}(\xi)\sum_{j=1}^n a_j\prod_{\ell\neq j}(\xi-\xi_\ell).
\label{eqn:lem3b0}
\end{equation}
In particular, if $n=0$,
 then Eq.~\eqref{eqn:lem3b} reduces to $\kappa_k(\xi)=C_3\kappa_1(\xi)$.
\end{lemma}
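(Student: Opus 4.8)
The plan is to convert the failure of condition~(H2) into an explicit first-order relation tying $\kappa_k$ to $\kappa_1$ through a single rational function, and then to read the asserted partial-fraction shape off that relation.

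First I would set $g(\xi):=\theta_k(\xi)/\Omega(\xi)^{k-1}$, which lies in $\Cset(\xi)$ precisely because (H2) fails, and rewrite it as $\theta_k=g\,\Omega^{k-1}$. Differentiating and invoking $\Omega'/\Omega=\kappa_1$ from \eqref{eqn:omega} together with $\theta_k'=\kappa_k\Omega^{k-1}$ gives $\kappa_k\Omega^{k-1}=g'\Omega^{k-1}+(k-1)g\kappa_1\Omega^{k-1}$. Cancelling the nowhere-vanishing factor $\Omega^{k-1}$ leaves the key identity
\[
\kappa_k=g'+(k-1)\kappa_1\,g,
\]
which reduces the entire lemma to the study of an arbitrary rational $g$.

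Next I would factor the nonzero rational function $g$. Over $\Cset$ every such function can be written as $g(\xi)=C_3\prod_{j=1}^n(\xi-\xi_j)^{-a_j}$ with $C_3\in\Cset^\ast$, distinct $\xi_j$, and exponents $a_j\in\Zset\setminus\{0\}$ (poles giving $a_j>0$, zeros giving $a_j<0$), the case $n=0$ occurring exactly when $g$ is constant. Logarithmic differentiation yields $g'/g=-\sum_{j=1}^n a_j/(\xi-\xi_j)$, so the key identity becomes $\kappa_k=g\big[(k-1)\kappa_1-\sum_{j=1}^n a_j/(\xi-\xi_j)\big]$. Writing the bracket over the common denominator $\kappa_{1\d}\prod_{j=1}^n(\xi-\xi_j)$ produces exactly the numerator $\hat{\kappa}_k$ of \eqref{eqn:lem3b0}; multiplying back by $g=C_3\prod_{j=1}^n(\xi-\xi_j)^{-a_j}$ then cancels one power of each $(\xi-\xi_j)$ and sends the remaining factors into the denominator, reproducing \eqref{eqn:lem3b}. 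In the special case $n=0$ the bracket is simply $(k-1)\kappa_1$ and $g=C_3$, whence $\kappa_k=(k-1)C_3\kappa_1$, i.e.\ $\kappa_k=C_3\kappa_1$ after renaming the constant.

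The computation is routine, so the only delicate points are bookkeeping. The sign convention in the factorization---choosing $(\xi-\xi_j)^{-a_j}$ rather than $(\xi-\xi_j)^{a_j}$---is exactly what makes the minus sign in \eqref{eqn:lem3b0} appear, and getting this backwards would spoil the match with $\hat{\kappa}_k$. One must also dispose of the degenerate case $g\equiv0$ (equivalently $\kappa_k\equiv0$), which is incompatible with the requirement $C_3\neq0$ and which I would simply exclude, since $\kappa_k\not\equiv0$ in every situation of interest. I expect the main obstacle to be verifying that the common-denominator combination reproduces $\hat{\kappa}_k$ verbatim and that multiplying by $g$ yields precisely the denominator $\kappa_{1\d}\prod_{j=1}^n(\xi-\xi_j)^{a_j+1}$, with the single cancellation carried out correctly.
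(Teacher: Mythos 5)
Your proof is correct and takes essentially the same route as the paper's: the paper introduces $w=\theta_k'/\theta_k$ and deduces $w=(k-1)\kappa_1(\xi)-\sum_{j=1}^n a_j/(\xi-\xi_j)$, which is precisely your identity $\kappa_k=g'+(k-1)\kappa_1 g$ once $g=\theta_k/\Omega^{k-1}$ is factored as $C_3\prod_{j=1}^n(\xi-\xi_j)^{-a_j}$ and logarithmic derivatives are taken. The common-denominator computation producing $\hat{\kappa}_k$ and the denominator $\kappa_{1\d}(\xi)\prod_{j=1}^n(\xi-\xi_j)^{a_j+1}$ matches the paper's, and your explicit exclusion of the degenerate case $g\equiv 0$ is handled only implicitly there.
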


\begin{proof}
Let $w(\xi)=\theta_k'(\xi)/\theta_k(\xi)$
 as in the proof of Lemma~\ref{lem:3a}.
We easily have
\[
\theta_k(\xi)=C_3\exp\left(\int w(\xi)\d\xi\right)
\]
for some constant $C_3\neq 0$.
Hence,
\begin{equation}
\theta_k'(\xi)=C_3 w(\xi)\exp\left(\int w(\xi)\d\xi\right).
\label{eqn:lem3b1}
\end{equation}
On the other hand, by \eqref{eqn:omega}
\begin{equation}
\theta_k'(\xi)=\kappa_k(\xi)\Omega(\xi)^{k-1}
 =\kappa_k(\xi)\exp\left((k-1)\int\kappa_1(\xi)\d\xi\right).
\label{eqn:lem3b2}
\end{equation}

Assume that condition~(H2) does not hold.
Then $w(\xi)=\kappa_k(\xi)\Omega(\xi)^{k-1}/\theta_k(\xi)$ is rational.
Comparing \eqref{eqn:lem3b1} and \eqref{eqn:lem3b2},
 we cannot conclude that $w(\xi)=(k-1)\kappa_1(\xi)$ but obtain
\[
w(\xi)
 =(k-1)\kappa_1(\xi)-\sum_{j=1}^n\frac{a_j}{\xi-\xi_j}
 =C_3^{-1}\kappa_k(\xi)\prod_{j=1}^n(\xi-\xi_j)^{a_j}
\]
where $n\in\Zset_{\ge 0}$, $a_j\in\Zset\setminus\{0\}$
 and $\xi_j\in\Cset$, $j=1,\ldots,n$, with $\xi_j\neq\xi_\ell$ for $j\neq\ell$,
 since $\kappa_1(\xi),\kappa_k(\xi),w(\xi)\in\Cset(\xi)$.
This yields the desired result.
\end{proof}

This lemma means that $\kappa_k(\xi)$ has the very special form \eqref{eqn:lem3b}
 with \eqref{eqn:lem3b0} if condition~(H2) does not hold,
 and it is useful to determine whether condition~(H2) holds.
It is clear that the polynomial $\hat{\kappa}_k(\xi)$ has a zero at $\xi=\xi_j$
 if $\kappa_{1\d}(\xi_j)=0$, and $\hat{\kappa}_k(\xi_j)\neq 0$ otherwise.
For $k\ge 2$, we write
\begin{equation}
\kappa_{k\d}(\xi)
 =\kappa_{1\d}(\xi)\prod_{j=1}^{n_1}(\xi-\xi_{1j})^{a_{1j}}
 \prod_{j=1}^{n_k}(\xi-\xi_{kj})^{a_{kj}},
\label{eqn:kd}
\end{equation}
where $n_\ell\in\Zset_{\ge 0}$,
 $\xi_{\ell j}\in\Cset$ and $a_{\ell j}\in\Zset\setminus\{0\}$,
 $j=1,\ldots,n_\ell$, if $n_\ell>0$ for $\ell=1,k$,
 such that $\xi_{1j}$ is a root of $\kappa_{1\d}(\xi)$ but $\xi_{kj}$ is not,
 and $\xi_{\ell j_1}\neq\xi_{\ell j_2}$ if $j_1\neq j_2$.
Note that $a_{kj}>0$, $j=1,\ldots,n_k$,
 and $a_{1j}\ge -b_{1j}$ but $a_{1j}\neq 0$, $j=1,\ldots,n_1$,
 if $n_k$ and $n_1$, respectively, are positive,
 where $b_{1j}$ is the multiplicity of the zero $\xi_{1j}$ for $\kappa_{1\d}(\xi)$
 since $\kappa_{k\d}(\xi)$ is a polynomial and $\kappa_{1\d}(\xi_{kj})\neq 0$, $j=1,\ldots,n_k$.
When $n_1>0$, let
\begin{equation}
\bar{\kappa}_{kb}(\xi)=(k-1)\kappa_{1\n}(\xi)\prod_{j=1}^{n_1}(\xi-\xi_{1j})
 -\kappa_{1\d}(\xi)\sum_{j=1}^{n_1}(a_{1j}+b_j-1)\prod_{\ell\neq j}(\xi-\xi_{1\ell}),
\label{eqn:bkappa}
\end{equation}
where $b=(b_1,\ldots,b_{n_1})$ with $b_j\in\Nset$, $j=1,\ldots.n_1$.
Obviously, $\bar{\kappa}_{kb}(\xi)$ has a zero at $\xi=\xi_{1j}$ like $\hat{\kappa}_k(\xi)$
 by $\kappa_{1\d}(\xi_{1j})=0$.
We see that if $b_{1j}>1$, i.e., the zero $\xi_{1j}$ is not simple for $\kappa_{1\d}(\xi)$,
 then it is simple for $\bar{\kappa}_{kb}(\xi)$ with any $b\in\Nset^{n_1}$,
 since  $\kappa_{1\n}(\xi_{1j})\neq 0$.

\begin{lemma}
\label{lem:3c}
Suppose that condition~{\rm(H2)} does not hold and $n_1>0$,
 and fix $j\in\{1,\ldots,n_1\}$.
\begin{enumerate}
\item[\rm(i)]
If the zero $\xi_{1j}$ is not simple for $\bar{\kappa}_{kb_0}(\xi)$ with some $b_0\in\Nset^{n_1}$,
 then it is simple for $\kappa_{1\d}(\xi)$ and $\bar{\kappa}_{kb}(\xi)$ with $b_j\neq b_{j0}$,
 where $b_{j0}$ is the $j$-th element of $b_0$ for $j=1,\ldots,n_1$.
\item[\rm(ii)]
If the zero $\xi_{1j}$ is simple for $\bar{\kappa}_{kb_0}(\xi)$ with some $b_0\in\Nset^{n_1}$,
 then so is it for $\bar{\kappa}_{kb}(\xi)$ with $b_j=b_{j0}$.
\end{enumerate}
\end{lemma}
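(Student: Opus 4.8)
The plan is to decide the simplicity of the zero $\xi_{1j}$ of $\bar{\kappa}_{kb}(\xi)$ by evaluating its first derivative at $\xi=\xi_{1j}$, a zero being simple exactly when that derivative is nonzero. I would write $\bar{\kappa}_{kb}=T_1+T_2$ with $T_1(\xi)=(k-1)\kappa_{1\n}(\xi)\prod_{m=1}^{n_1}(\xi-\xi_{1m})$ and $T_2(\xi)=-\kappa_{1\d}(\xi)\sum_{p=1}^{n_1}(a_{1p}+b_p-1)\prod_{\ell\neq p}(\xi-\xi_{1\ell})$. Since both $\prod_m(\xi-\xi_{1m})$ and $\kappa_{1\d}(\xi)$ vanish at $\xi_{1j}$, we have $\bar{\kappa}_{kb}(\xi_{1j})=0$, so $\xi_{1j}$ is indeed a zero. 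Differentiating and noting that at $\xi=\xi_{1j}$ only the factor $(\xi-\xi_{1j})$ and only the $p=j$ summand contribute, I obtain
\[
T_1'(\xi_{1j})=(k-1)\kappa_{1\n}(\xi_{1j})\,\Pi_j,\qquad \Pi_j:=\prod_{m\neq j}(\xi_{1j}-\xi_{1m})\neq 0.
\]

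Next I would split on the multiplicity $b_{1j}$ of $\xi_{1j}$ as a root of $\kappa_{1\d}$. If $b_{1j}\ge 2$, then $\kappa_{1\d}(\xi_{1j})=\kappa_{1\d}'(\xi_{1j})=0$ forces $T_2'(\xi_{1j})=0$, so $\bar{\kappa}'_{kb}(\xi_{1j})=(k-1)\kappa_{1\n}(\xi_{1j})\Pi_j\neq 0$, using $k\ge 2$ and $\kappa_{1\n}(\xi_{1j})\neq 0$ by the coprimality of $\kappa_{1\n}$ and $\kappa_{1\d}$; hence $\xi_{1j}$ is simple for every $b$, which is precisely the remark preceding the lemma. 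If instead $b_{1j}=1$, then $\kappa_{1\d}'(\xi_{1j})\neq 0$ and a short computation gives
\[
\bar{\kappa}'_{kb}(\xi_{1j})=\Pi_j\Bigl[(k-1)\kappa_{1\n}(\xi_{1j})-\kappa_{1\d}'(\xi_{1j})(a_{1j}+b_j-1)\Bigr].
\]

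The crux is the observation that this derivative depends on the multi-index $b$ only through its $j$-th entry $b_j$: every other summand of $T_2$ carries the vanishing factor $(\xi_{1j}-\xi_{1j})$ and so drops out. Consequently the bracket vanishes for at most one value $b_j=b_{j0}^{\ast}:=1-a_{1j}+(k-1)\kappa_{1\n}(\xi_{1j})/\kappa_{1\d}'(\xi_{1j})$, and $\xi_{1j}$ is a simple zero for all other $b_j$. Both assertions then follow. For (i): if $\xi_{1j}$ is non-simple for some $\bar{\kappa}_{kb_0}$, the case $b_{1j}\ge 2$ is excluded (it would force simplicity), so $\xi_{1j}$ is simple for $\kappa_{1\d}$; moreover non-simplicity forces $(b_0)_j=b_{j0}^{\ast}$, i.e.\ $b_{j0}=b_{j0}^{\ast}$, and then every $b_j\neq b_{j0}$ yields a nonzero bracket, hence simplicity. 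For (ii): simplicity for $\bar{\kappa}_{kb_0}$ means either $b_{1j}\ge 2$, or $b_{1j}=1$ with $(b_0)_j\neq b_{j0}^{\ast}$; since simplicity is governed by $b_j$ alone, retaining $b_j=(b_0)_j=b_{j0}$ preserves it.

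I expect the only delicate part to be the bookkeeping in the derivative computation --- keeping track of which factor and which summand survive at $\xi=\xi_{1j}$ --- rather than any conceptual difficulty. Once the displayed formula for $\bar{\kappa}'_{kb}(\xi_{1j})$ and its dependence on $b_j$ alone are established, both parts of the lemma are immediate.
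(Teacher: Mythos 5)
Your proof is correct. It reaches the lemma by a mechanically different route than the paper: the paper never differentiates anything, but instead writes
$\bar{\kappa}_{kb}(\xi)=\bar{\kappa}_{kb_0}(\xi)-\kappa_{1\d}(\xi)\sum_{m=1}^{n_1}(b_m-b_{m0})\prod_{\ell\neq m}(\xi-\xi_{1\ell})$
and compares orders of vanishing at $\xi=\xi_{1j}$. For part (i): non-simplicity of $\xi_{1j}$ for $\bar{\kappa}_{kb_0}$ forces $b_{1j}=1$ (by the remark preceding the lemma, which is your case $b_{1j}\ge 2$), and then for $b_j\neq b_{j0}$ the correction term has exactly a simple zero there (only the $m=j$ summand survives), so subtracting it from a polynomial vanishing to order at least two leaves a simple zero. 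For part (ii): when $b_j=b_{j0}$ the correction term vanishes to order at least two at $\xi_{1j}$, so it cannot destroy the simplicity of the zero of $\bar{\kappa}_{kb_0}$. Your argument instead evaluates $\bar{\kappa}'_{kb}(\xi_{1j})$ in closed form, observes it is an affine function of $b_j$ alone, and identifies the unique critical value $b_j^{\ast}=1-a_{1j}+(k-1)\kappa_{1\n}(\xi_{1j})/\kappa_{1\d}'(\xi_{1j})$ at which it can vanish; both parts then follow by inspection. The two proofs hinge on the same two facts --- $\kappa_{1\n}(\xi_{1j})\neq 0$ by coprimality, and the survival of only the $j$-th summand at $\xi=\xi_{1j}$ --- so the difference is one of bookkeeping: the paper's difference-of-polynomials comparison is slightly shorter, while your derivative criterion is more explicit and yields the exceptional value $b_j^{\ast}$ as a bonus, making the ``at most one bad $b_j$'' structure of the lemma completely transparent. (Like the paper's proof, yours never actually uses the hypothesis that condition (H2) fails; that hypothesis only situates the lemma in the context where it is applied.)
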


\begin{proof}
Assume that the zero $\xi_{1j}$ of $\bar{\kappa}_{kb_0}(\xi)$ is not simple.
Then the zero $\xi_{1j}$ is simple  for $\kappa_{1\d}(\xi)$, i.e., $b_{1j}=1$,
 or else it is simple for $\bar{\kappa}_{kb_0}(\xi)$ as stated above.
Hence, if $b_j\neq b_{j0}$, then the zero $\xi_{1j}$ is simple for
\begin{equation}
\bar{\kappa}_{kb}(\xi)=\bar{\kappa}_{kb_0}(\xi)
 -\kappa_{1\d}(\xi)\sum_{m=1}^{n_1}(b_m-b_{m0})\prod_{\ell\neq m}(\xi-\xi_{1\ell}),
\label{eqn:lem3c}
\end{equation}
since it is a simple zero of the second term.
Thus, we obtain part~(i).

We next assume that the zero $\xi_{1j}$ is simple for $\bar{\kappa}_{kb_0}(\xi)$.
As easily seen, if $b_j=b_{j0}$,
then $\xi=\xi_{1j}$ is  at least a double zero of the second term in \eqref{eqn:lem3c}.
This means part~(ii).
\end{proof}

Define the polynomial
\begin{align}
\rho_k(\xi)
=& 
 (k-1)\kappa_{1\n}(\xi)\prod_{j=1}^{n_k}(\xi-\xi_{kj})
 -\kappa_{1\d}(\xi)\sum_{j=1}^{n_k}(a_{kj}-1)\prod_{\ell\neq j}(\xi-\xi_{k\ell})\notag\\
& -\frac{\displaystyle\kappa_{1\d}(\xi)\prod_{j=1}^{n_k}(\xi-\xi_{1k})}
 {\displaystyle\prod_{j=1}^{n_1}(\xi-\xi_{1j})}
  \sum_{j=1}^{n_1}a_{1j}\prod_{\ell\neq j}(\xi-\xi_{1\ell}).
\label{eqn:rhok}
\end{align}
Let $\bar{\rho}_k(\xi)$ and $\tilde{\rho}_k(\xi)$ be the quotient and remainder,
 respectively, when $\kappa_{k\n}(\xi)$ is divided by $\rho_k(\xi)$.
So $\kappa_{k\n}(\xi)=\bar{\rho}_k(\xi)\rho_k(\xi)+\tilde{\rho}_k(\xi)$.
Let $\bar{n}\in\Zset_{\ge 0}$ be the number of distinct roots of $\bar{\rho}_k(\xi)$,
 and let $\bar{\xi}_j\in\Cset$ and $\bar{a}_j\in\Nset$, $j=1,\ldots,\bar{n}$,
 denote its roots and multiplicities, respectively, if $\bar{n}\ge 1$:
\begin{equation}
\bar{\rho}_k(\xi)=\bar{C}\prod_{j=1}^{\bar{n}}(\xi-\bar{\xi}_j)^{\bar{a}_j},
\label{eqn:brhok}
\end{equation}
where $\bar{C}\in\Cset$ is a nonzero constant.
If $\deg(\kappa_{k\n})\le\deg(\rho_k)$,
 then we set $\bar{n}=0$ and $\bar{\rho}_k(\xi)\equiv\bar{C}_0$,
 where $\bar{C}_0\in\Cset$ is a constant which may be zero.
We also consider the first-order differential equation
\begin{equation}
\left(\kappa_{1\d}(\xi)\prod_{j=1}^{n_k}(\xi-\xi_{kj})\right)z'+\rho_k(\xi)z
 =\kappa_{k\n}(\xi).
\label{eqn:prop3b}
\end{equation}
Let $\rho_{k0}$ be the leading coefficient of $\rho_k(\xi)$
 and let $\bar{\kappa}_k(\xi)=\bar{\kappa}_{kb}(\xi)$
 with $b=(1,\ldots,1)\in\Nset^{n_1}$, i.e.,
\[
\bar{\kappa}_k(\xi)=(k-1)\kappa_{1\n}(\xi)\prod_{j=1}^{n_1}(\xi-\xi_{1j})
 -\kappa_{1\d}(\xi)\sum_{j=1}^{n_1}a_{1j}\prod_{\ell\neq j}(\xi-\xi_{1\ell}).
\]
Using Lemmas~\ref{lem:3b} and \ref{lem:3c},
 we obtain some effective criteria for condition~(H2) as follows.

\begin{proposition}
\label{prop:3b}
Let $k\ge 2$.
Suppose that $\kappa_{1\n}(\xi),\kappa_{k\n}(\xi)\not\equiv 0$
 and $\kappa_{k\n}(\xi_{1j})\neq 0$, $j=1,\ldots,n_1$.
If one of the following conditions holds, then condition {\rm(H2)} holds.
\begin{enumerate}
\item[\rm(i)]
$a_{kj}=1$ for some $j=1,\ldots,n_k;$
\item[\rm(ii)]
For each $j=1,\ldots,n_1$,
 the zero $\xi_{1j}$ is not simple for $\bar{\kappa}_k(\xi)$
 or simple for $\bar{\kappa}_{kb}(\xi)$ with some $b\in\Nset^{n_1}$ for each $b_j\in\Nset$.
\end{enumerate}
Moreover, if $n_1>0$, then assume that for $j=1,\ldots,n_1$
 the zero $\xi_{1j}$ of $\bar{\kappa}_{kb}(\xi)$ is simple
 when $b_j>1$.
If one of the following conditions holds, then condition {\rm(H2)} holds$:$
\begin{enumerate}
\item[\rm(iii)]
Eq.~\eqref{eqn:prop3b} does not have a polynomial solution
 that has no root at $\xi=\xi_{1j}$ and $\xi_{k\ell}$
 for any $j=1,\ldots,n_1$ and $\ell=1,\ldots,n_k;$
\item[\rm(iv)]
$\bar{n}=0$,
\begin{enumerate}
\item[\rm(iva)]
$\bar{\rho}_k(\xi)\equiv 0$ or $\tilde{\rho}_k(\xi)\not\equiv 0;$
\item[and \rm(ivb)]
$\deg(\kappa_{1\d})+n_k\neq\deg(\rho_k)+1$ or $-\rho_{k0}\notin\Nset;$
\end{enumerate}
\item[\rm(v)]
$\bar{n}>0$ and $\deg(\kappa_{1\d})+n_k>\max(\deg(\kappa_{k\n}),\deg(\rho_k)+1);$
\item[\rm(vi)]
$\bar{n}>0$, $\deg(\kappa_{1\d})+n_k<\deg(\rho_k)-\deg(\bar{\rho}_k)+1$ and
\begin{enumerate}
\item[\rm(via)]
$\bar{\rho}_k(\xi)$ has a root at $\xi=\xi_{1j}$ or $\xi_{k\ell}$
 for some $j=1,\ldots,n_1$ or $\ell=1,\ldots,n_k;$
\item[or \rm(vib)]
$\displaystyle
\tilde{\rho}_k(\xi)\not\equiv\kappa_{1\d}(\xi)\bar{\rho}_k'(\xi)\prod_{j=1}^{n_k}(\xi-\xi_{kj})$.
\end{enumerate}
\end{enumerate}
\end{proposition}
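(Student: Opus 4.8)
The plan is to prove every implication by contradiction. Suppose condition~(H2) fails, so that $\psi(\xi):=\theta_k(\xi)/\Omega(\xi)^{k-1}\in\Cset(\xi)$. Differentiating $\theta_k=\psi\,\Omega^{k-1}$ and using $\theta_k'=\kappa_k\Omega^{k-1}$ together with $\Omega'/\Omega=\kappa_1$ yields the first-order linear relation $\kappa_k=\psi'+(k-1)\kappa_1\psi$; writing $\psi=C_3\prod_{j=1}^{n}(\xi-\xi_j)^{-a_j}$ recovers exactly the representation \eqref{eqn:lem3b}--\eqref{eqn:lem3b0} of Lemma~\ref{lem:3b}. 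Thus the negation of (H2) is equivalent to the existence of a rational $\psi$ solving this equation, and the whole proof consists in showing that each of (i)--(vi) makes such a $\psi$ impossible. Before treating the cases I would match the data of Lemma~\ref{lem:3b} with the factorisation \eqref{eqn:kd}: a pole $\xi_{kj}$ of $\kappa_k$ that is not a root of $\kappa_{1\d}$ forces $\psi$ to have a pole there of order $a_{kj}-1$ (since $\kappa_1$ is regular at $\xi_{kj}$ and the pole of $\kappa_k$ must then come from $\psi'$), while the roots $\xi_{1j}$ of $\kappa_{1\d}$ contribute a forced local order that I read off through the auxiliary polynomials $\bar{\kappa}_{kb}(\xi)$ of \eqref{eqn:bkappa}.

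Conditions~(i) and (ii) are local statements at these two kinds of points. For (i), if $a_{kj}=1$ for some $j$, then the required pole order $a_{kj}-1=0$ means $\psi$ is regular at $\xi_{kj}$, whence $\psi'+(k-1)\kappa_1\psi$ is regular there; this contradicts $\kappa_k$ having a simple pole at $\xi_{kj}$, so (H2) must hold. For (ii) I would examine the order of vanishing of $\bar{\kappa}_{kb}(\xi)$ at each $\xi_{1j}$: since $\kappa_{1\n}(\xi_{1j})\neq 0$, the first term of \eqref{eqn:bkappa} gives a simple zero while the $\kappa_{1\d}$-term gives a zero of order $b_{1j}$, so the admissible local order of $\psi$ at $\xi_{1j}$ is encoded in whether $\xi_{1j}$ is a simple zero of $\bar{\kappa}_{kb}$. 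Lemma~\ref{lem:3c} controls how this simplicity varies with $b$, and the hypothesis of (ii) is arranged to exclude every admissible choice, again contradicting the existence of $\psi$.

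For (iii)--(vi) I would, under the stated simplicity normalisation of the $\xi_{1j}$, substitute the forced ansatz $\psi=z(\xi)\,\Phi(\xi)$ — with $\Phi$ the fixed rational factor carrying the mandatory poles at the $\xi_{1j}$ and $\xi_{kj}$, and $z$ a polynomial having no root at any $\xi_{1j}$ or $\xi_{k\ell}$ — into $\kappa_k=\psi'+(k-1)\kappa_1\psi$ and clear denominators to obtain exactly \eqref{eqn:prop3b}, i.e. $\bigl(\kappa_{1\d}\prod_j(\xi-\xi_{kj})\bigr)z'+\rho_k z=\kappa_{k\n}$ with $\rho_k$ as in \eqref{eqn:rhok}. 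Statement (iii) is then immediate. The remaining cases are degree counts on this equation: writing $d_A:=\deg(\kappa_{1\d})+n_k$ and comparing $\deg(\kappa_{1\d}\prod_j(\xi-\xi_{kj})\,z')=d_A+\deg z-1$ with $\deg(\rho_k z)=\deg(\rho_k)+\deg z$, and noting $\deg\kappa_{k\n}=\deg\rho_k+\deg\bar{\rho}_k$ when $\bar{n}>0$. In (v) the hypothesis $d_A>\max(\deg\kappa_{k\n},\deg\rho_k+1)$ makes the derivative term dominate and pins $\deg z$ below $1$, while the constant case is ruled out by $\deg\kappa_{k\n}>\deg\rho_k$; hence no solution. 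In (vi) the reverse inequality $d_A<\deg\rho_k-\deg\bar{\rho}_k+1$ makes $\rho_k z$ dominate, forcing $\deg z=\deg\bar{\rho}_k$ and then, by the degree gap, $z=\bar{\rho}_k$ exactly; this requires $\kappa_{1\d}\bar{\rho}_k'\prod_j(\xi-\xi_{kj})=\tilde{\rho}_k$, excluded by (vib), or else $z=\bar{\rho}_k$ acquires a forbidden root by (via). In (iv), with $\bar{n}=0$, the same count splits into $d_A>\deg\rho_k+1$, $d_A<\deg\rho_k+1$, and the balanced case $d_A=\deg\rho_k+1$; the first two are killed by (iva), and (ivb) removes the single degenerate cancellation in the balanced case.

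The main obstacle I anticipate is the bookkeeping of the local structure of $\psi$ at the roots $\xi_{1j}$ of $\kappa_{1\d}$, especially multiple roots ($b_{1j}>1$), where the poles of $\psi'$ and of $(k-1)\kappa_1\psi$ compete and can cancel. This is precisely what forces the family $\bar{\kappa}_{kb}$ and the simplicity dichotomy of Lemma~\ref{lem:3c}, and what justifies both the exponents in the ansatz $\Phi$ and the constraint that $z$ have no root at $\xi_{1j}$ or $\xi_{k\ell}$. A second, more routine but delicate point is checking that the leading coefficients do not accidentally vanish in the balanced case $d_A=\deg\rho_k+1$ of (iv): there the combined leading coefficient of the two terms equals $\deg z+\rho_{k0}$, which can vanish only when $-\rho_{k0}$ is a nonnegative integer, and since $\rho_{k0}\neq 0$ this is exactly the exclusion $-\rho_{k0}\notin\Nset$ built into (ivb).
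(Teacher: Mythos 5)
Your route is essentially the paper's: you restate Lemma~\ref{lem:3b} as rational solvability of $\psi'+(k-1)\kappa_1\psi=\kappa_k$ for $\psi=\theta_k/\Omega^{k-1}$, match pole orders of $\psi$ against the factorization \eqref{eqn:kd}, reduce (iii)--(vi) to the polynomial equation \eqref{eqn:prop3b} via the ansatz $\psi=z\Phi$ (your equation for $z$ is exactly the paper's identity $\kappa_{k\n}=\rho_k\hat{\rho}_k+\hat{\rho}_k'\kappa_{1\d}\prod_{j}(\xi-\xi_{kj})$ with $z=\hat{\rho}_k$), and settle (iv)--(vi) by the same degree and leading-coefficient counts that the paper packages into Lemma~\ref{lem:3d}. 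Your treatments of (i) and of (iii)--(vi), including the leading-coefficient cancellation $\deg z+\rho_{k0}=0$ in the balanced case of (iv), are correct and match the paper's.

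The genuine gap is condition~(ii). You assert that ``the hypothesis of (ii) is arranged to exclude every admissible choice'' of local orders of $\psi$ at the $\xi_{1j}$, but the self-consistency analysis yields a contradiction only at an index $j$ where $\xi_{1j}$ is \emph{not} simple for $\bar{\kappa}_k(\xi)$: there neither $\hat{b}_j=1$ (both terms of \eqref{eqn:prop3b1} then vanish to order $\ge 2$, so $\hat{\kappa}_k$ cannot have a simple zero) nor $\hat{b}_j\ge 2$ (then $\bar{\kappa}_{k\hat{b}}$ is simple by Lemma~\ref{lem:3c}(i), forcing $\hat{b}_j=1$) is consistent. At an index where only the \emph{second} alternative of (ii) holds --- $\xi_{1j}$ simple for $\bar{\kappa}_{kb}$ for every $b_j$ --- the choice $\hat{b}_j=1$ is perfectly consistent, and no contradiction follows from the local analysis. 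Concretely, take $\kappa_1=\lambda/\xi$ with $\lambda\notin\Qset$ and $\kappa_2=c/\xi^3$, $c\neq 0$ (realized by $\dot{\xi}=\xi^3$, $\dot{\eta}=\lambda\xi^2\eta+(c/2)\eta^2$ along $\eta=0$): then $n_1=1$, $\xi_{11}=0$, $a_{11}=2$, $n_2=0$, and $\bar{\kappa}_{2b}(\xi)=(\lambda-1-b_1)\xi$ has a simple zero at $0$ for every $b_1\in\Nset$, yet $\theta_2(\xi)/\Omega(\xi)=c\,\xi^{-2}/(\lambda-2)$ is rational, so (H2) fails while all your ``admissible choices'' are excluded only in your words, not in fact. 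The contradiction must be extracted from the branch in which some $\xi_{1j}$ is non-simple for $\bar{\kappa}_k$ --- this is exactly what the paper's proof does (it concludes from (H2) failing that every $\xi_{1j}$ is simple for $\bar{\kappa}_k$), and it is the only branch ever invoked in the application in Section~4, where one of the two zeros is double for $\bar{\kappa}_k$. Any completion of your sketch for (ii) has to make this case split explicit; a uniform ``exclusion of all admissible choices'' cannot be proved, and the same caveat affects your justification of the exact exponents $a_{1j}$ in the ansatz $\Phi$, which likewise rests on pinning $\hat{b}_j=1$ through the non-simplicity dichotomy rather than on a blanket claim.
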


Recall that $n_1,n_k,\xi_{1j},\xi_{kj}$ were defined in \eqref{eqn:kd}
 and $\bar{n}$ is the number of distinct roots of $\bar{\rho}_k(\xi)$.

\begin{proof}
Assume that $\kappa_{1\n}(\xi),\kappa_{k\n}(\xi)\not\equiv 0$,
 $\kappa_{k\n}(\xi_{1j})\neq 0$ and condition~(H2) does not hold.
Then by Lemma~\ref{lem:3b} Eq.~\eqref{eqn:lem3b} holds
 for $n\in\Zset_{\ge 0}$, $a_j\in\Zset\setminus\{0\}$
 and $\xi_j\in\Cset$, $j=1,\ldots,n$, with $\xi_j\neq\xi_\ell$ for $j\neq\ell$.
Comparing \eqref{eqn:lem3b} and \eqref{eqn:kd}
 and noting that $\kappa_{k\n}(\xi_{1j})\neq 0$ for $j=1,\ldots,n_1$,
 we can take
\begin{alignat}{3}
&
\xi_j=\xi_{1j}
&\quad&
\mbox{for $1\le j\le n_1$},\notag\\
&
\xi_{j+n_1}=\xi_{kj},\quad
a_{j+n_1}=a_{kj}-1\ge 1
&&
\mbox{for $1\le j\le n_k$},\label{eqn:prop3b1a}\\
&
\xi_{j+n_1+n_k}=\hat{\xi}_j,\quad
a_{j+n_1+n_k}+1\le 0
&&
\mbox{for $1\le j<\hat{n}=n-n_1-n_k$},\notag
\end{alignat}
where $\hat{\xi}_j\in\Cset$, $j=1,\ldots,\hat{n}$,
 such that $\hat{\xi}_j\neq\xi_{1\ell_1},\xi_{k\ell_k}$
 for any $\ell_1=1,\ldots,n_1$ and $\ell_k=1,\ldots,n_k$.
Here we have used the fact that
 $\hat{\kappa}_k(\xi),\kappa_{1\d}(\xi)\neq 0$
 at $\xi=\xi_{kj}$ and $\hat{\xi}_\ell$
 for $j=1,\ldots,n_k$ and $\ell=1,\ldots,\hat{n}$.
 If $n_1=0$ or $n_k=0$,
 then the corresponding relation in \eqref{eqn:prop3b1a} is ignored.
In particular, $a_{kj}\ge 2$, $j=1,\ldots,n_k$.
So condition~(i) does not occur.

Assume that $n_1>0$.
Since $\kappa_{k\n}(\xi_{1j})\neq 0$,
 one has $a_{1j}\ge 1$ for $j=1,\ldots,n_1$.
Let $\hat{a}_j=-a_{j+n_1+n_k}\ge1$ for $j=1,\ldots,\hat{n}$,
 and let $\hat{b}_j\in\Nset$ be the multiplicity of the zero $\xi_{1j}$
 of $\hat{\kappa}_k(\xi)$ for $j=1,\ldots,n_1$.
Again, via \eqref{eqn:lem3b} and \eqref{eqn:kd},
\[
a_j=a_{1j}+\hat{b}_j-1\ge 1,
\quad
1\le j\le n_1,
\]
so that Eq.~\eqref{eqn:lem3b0} becomes
\begin{align}
\hat{\kappa}_k(\xi)
=&\prod_{j=1}^{n_k}(\xi-\xi_{kj})\prod_{j=1}^{\hat{n}}(\xi-\hat{\xi}_{j})
\biggl((k-1)\kappa_{1\n}(\xi)\prod_{j=1}^{n_1}(\xi-\xi_{1j})\notag\\
&\hspace{10em}
-\kappa_{1\d}(\xi)\sum_{j=1}^{n_1}(a_{1j}+\hat{b}_j-1)\prod_{\ell\neq j}(\xi-\xi_{1\ell})
 \biggr)\notag\\
&
-\kappa_{1\d}(\xi)\prod_{j=1}^{n_1}(\xi-\xi_{1j})
 \biggl(\prod_{j=1}^{\hat{n}}(\xi-\hat{\xi}_j)
 \sum_{j=1}^{n_k}(a_{kj}-1)\prod_{\ell\neq j}(\xi-\xi_{k\ell})\notag\\
&\hspace{10em}
 -\prod_{j=1}^{n_k}(\xi-\xi_{kj})
 \sum_{j=1}^{\hat{n}}\hat{a}_{j}\prod_{\ell\neq j}(\xi-\xi_{k\ell})\biggr).
\label{eqn:prop3b1}
\end{align}
Suppose that condition~(ii) holds.
Then it follows from Lemma~\ref{lem:3c} that
 the zeros $\xi_{1j}$ of $\bar{\kappa}_{kb}(\xi)$, $j=1,\ldots,n_1$,
 are all simple for $b\neq(1,\ldots,1)\in\Nset^{n_1}$
 although it may be simple even for $b=(1,\ldots,1)$.
Hence, if $\hat{b}_j>1$ for some $j\in\{1,\ldots,n_1\}$,
 then we see via \eqref{eqn:prop3b1} that
 $\hat{\kappa}_k(\xi)$ has a simple zero at $\xi=\xi_{1j}$,
 since so does
\[
\bar{\kappa}_{k\hat{b}}(\xi)
=(k-1)\kappa_{1\n}(\xi)\prod_{j=1}^{n_1}(\xi-\xi_{1j})\notag\\
 -\kappa_{1\d}(\xi)\sum_{j=1}^{n_1}(a_{1j}+\hat{b}_j-1)\prod_{\ell\neq j}(\xi-\xi_{1\ell})
\]
with $\hat{b}=(\hat{b}_1,\ldots,\hat{b}_{n_1})$
 as well as $\kappa_{1\d}(\xi_{1j})=0$.
This yields a contradiction, so that $\hat{b}_j=1$, i.e.,
 $\hat{\kappa}_k(\xi)$ has a simple zero at $\xi=\xi_{1j}$ for $j=1,\ldots,n_1$.
Let $\hat{b}_j=1$, $j=1,\ldots,n_1$, in \eqref{eqn:prop3b1}.
Then the zeros $\xi=\xi_{1j}$, $j=1,\ldots,n_1$,
 are all simple for $\bar{\kappa}_k(\xi)$
 since they are not simple for $\hat{\kappa}_k(\xi)$ if not.
Thus, condition~(ii) does not occur.

We now assume that $n_1>0$
 and $\bar{\kappa}_{kb}(\xi)$ has a simple zero at $\xi=\xi_{1j}$ if $b_j>1$.
From the above argument
 we see that for $j=1,\ldots,n_1$ the zero $\xi=\xi_{1j}$ of $\hat{\kappa}_k(\xi)$
 is simple, i.e., $\hat{b}_j=1$, so that $a_j=a_{1j}\neq 0$.
Using \eqref{eqn:lem3b} and \eqref{eqn:prop3b1a}, we have
\[
\kappa_{k\n}(\xi)
=\frac{\displaystyle C_3\hat{\kappa}_k(\xi)
 \prod_{j=1}^{\hat{n}}(\xi-\hat{\xi}_{j})^{\hat{a}_j-1}}
 {\displaystyle\prod_{j=1}^{n_1}(\xi-\xi_{1j})}.
\]
Substituting \eqref{eqn:lem3b0} into the above equation and using \eqref{eqn:rhok},
 we obtain
\begin{align}
\kappa_{k\n}(\xi)
=&
 C_3\Biggl((k-1)\kappa_{1\n}(\xi)\prod_{j=1}^{n_k}(\xi-\xi_{kj})
 \prod_{j=1}^{\hat{n}}(\xi-\hat{\xi}_{j})^{\hat{a}_j}\notag\\
&
-\frac{\kappa_{1\d}(\xi)}{\displaystyle\prod_{j=1}^{n_1}(\xi-\xi_{1j})}
 \prod_{j=1}^{n_k}(\xi-\xi_{kj})
 \prod_{j=1}^{\hat{n}}(\xi-\hat{\xi}_j)^{\hat{a}_j}
 \sum_{j=1}^{n_1}a_{1j}\prod_{\ell\neq j}(\xi-\xi_{1\ell})\notag\\
&
-\kappa_{1\d}(\xi)\biggl(
 \prod_{j=1}^{\hat{n}}(\xi-\hat{\xi}_j)^{\hat{a}_j}
 \sum_{j=1}^{n_k}(a_{kj}-1)\prod_{\ell\neq j}(\xi-\xi_{k\ell})\notag\\
& \hspace*{5em}
 -\prod_{j=1}^{n_k}(\xi-\xi_{kj})
 \prod_{j=1}^{\hat{n}}(\xi-\hat{\xi}_\ell)^{\hat{a}_j-1}
 \sum_{j=1}^{\hat{n}}\hat{a}_j\prod_{\ell\neq j}(\xi-\hat{\xi}_\ell)\biggr)\Biggr)\notag\\
=& \rho_k(\xi)\hat{\rho}_k(\xi)
 +\hat{\rho}_k'(\xi)\kappa_{1\d}(\xi)\prod_{j=1}^{n_k}(\xi-\xi_{kj})
\label{eqn:prop3b2}
\end{align}
where
\begin{equation}
\hat{\rho}_k(\xi)=C_3\prod_{j=1}^{\hat{n}}(\xi-\hat{\xi}_j)^{\hat{a}_j}.
\label{eqn:polsol}
\end{equation}
Recall that $\hat{a}_j=-a_{j+n_1+n_k}$, $j=1,\ldots,\hat{n}$.
We easily see that Eq.~\eqref{eqn:prop3b2} holds even if $n_1=0$.
Thus, $\hat{\rho}_k(\xi)$ is a polynomial solution to \eqref{eqn:prop3b},
 so that condition~(iii) does not occur.
 
It remains to show that conditions~(iv)-(vi) do not occur
 when condition~(H2) does not hold under our other assumptions.
The expression \eqref{eqn:prop3b2} gives a key for our proofs of the remaining parts.
Recall that $\bar{n}$ and $\hat{n}$ are,  respectively, the numbers of distinct roots
 of $\bar{\rho}_k(\xi)$ and $\hat{\rho}_k(\xi)$.
We need the following lemma.

\begin{lemma}
\label{lem:3d}
\
\begin{enumerate}
\item[\rm(i)]
If $\bar{n}>0$, then $\hat{n}>0$.
\item[\rm(ii)]
If $\bar{n}=0$ and one of the following conditions holds, then $\hat{n}=0\!:$
\begin{enumerate}
\item[\rm(iia)]
$\deg(\kappa_{1\d})+n_k\neq\deg(\rho_k)+1;$
\item[\rm(iib)]
$-\rho_{k0}\notin\Nset$.
\end{enumerate}
\end{enumerate}
\end{lemma}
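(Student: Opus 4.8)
The plan is to extract both statements from the key identity \eqref{eqn:prop3b2}, which we may rewrite as
\[
\kappa_{k\n}(\xi)=\rho_k(\xi)\hat{\rho}_k(\xi)+\hat{\rho}_k'(\xi)\,A(\xi),\qquad
A(\xi):=\kappa_{1\d}(\xi)\prod_{j=1}^{n_k}(\xi-\xi_{kj}),
\]
together with the Euclidean relation $\kappa_{k\n}=\bar{\rho}_k\rho_k+\tilde{\rho}_k$ that defines $\bar{\rho}_k$ and $\tilde{\rho}_k$. Since $\kappa_{1\d}$ is monic, $A$ is monic of degree $\deg(\kappa_{1\d})+n_k$, and by \eqref{eqn:polsol} we have $\hat{\rho}_k=C_3\prod_{j=1}^{\hat{n}}(\xi-\hat{\xi}_j)^{\hat{a}_j}$ with $C_3\neq 0$; in particular $\hat{n}=0$ is equivalent to $\hat{\rho}_k\equiv C_3$. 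Recall also that $\bar{n}=0$ is equivalent to $\deg(\kappa_{k\n})\le\deg(\rho_k)$, because over $\Cset$ the quotient $\bar{\rho}_k$ is root-free exactly when it is constant. Both parts then reduce to comparing $\deg(\kappa_{k\n})$ with $\deg(\rho_k)$.

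For part~(i) I would argue by contraposition, proving $\hat{n}=0\Rightarrow\bar{n}=0$. If $\hat{n}=0$ then $\hat{\rho}_k\equiv C_3$ and $\hat{\rho}_k'\equiv 0$, so the identity above collapses to $\kappa_{k\n}=C_3\rho_k$. Hence dividing $\kappa_{k\n}$ by $\rho_k$ yields the nonzero constant quotient $\bar{\rho}_k\equiv C_3$ with $\tilde{\rho}_k\equiv 0$, and a nonzero constant has no roots, so $\bar{n}=0$, as required.

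For part~(ii) I would argue by contradiction: assume $\bar{n}=0$ and that (iia) or (iib) holds, but $\hat{n}>0$, and set $D:=\deg(\hat{\rho}_k)\ge 1$. It then suffices to prove $\deg(\kappa_{k\n})>\deg(\rho_k)$, which contradicts $\bar{n}=0$. In the identity the two summands have degrees $\deg(\rho_k)+D$ and $(D-1)+\deg(A)=(D-1)+\deg(\kappa_{1\d})+n_k$. When (iia) holds, i.e.\ $\deg(\kappa_{1\d})+n_k\neq\deg(\rho_k)+1$, these two degrees are unequal, so no cancellation occurs and $\deg(\kappa_{k\n})$ equals the larger of them; since one of the two already equals $\deg(\rho_k)+D>\deg(\rho_k)$, we conclude $\deg(\kappa_{k\n})>\deg(\rho_k)$.

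The remaining and most delicate case is the borderline $\deg(\kappa_{1\d})+n_k=\deg(\rho_k)+1$, in which (iia) fails and the two summands share the common top degree $\deg(\rho_k)+D$; here (iib) must enter through a leading-coefficient computation. Writing $c:=C_3$ for the leading coefficient of $\hat{\rho}_k$ and using that $A$ is monic with $\operatorname{lead}(\hat{\rho}_k')=Dc$, the coefficient of $\xi^{\deg(\rho_k)+D}$ in $\kappa_{k\n}$ equals $\rho_{k0}c+Dc=c(\rho_{k0}+D)$. Condition (iib), $-\rho_{k0}\notin\Nset$, guarantees $\rho_{k0}\neq -D$ for every integer $D\ge 1$, so this coefficient is nonzero, whence $\deg(\kappa_{k\n})=\deg(\rho_k)+D>\deg(\rho_k)$, the desired contradiction. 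I expect this cancellation analysis at the shared top degree to be the crux of the argument: it is exactly where the arithmetic hypothesis $-\rho_{k0}\notin\Nset$ is indispensable, reflecting the classical obstruction to a first-order linear ODE admitting a polynomial solution when the indicial exponent at infinity is a nonnegative integer.
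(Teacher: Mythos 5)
Your proof is correct and takes essentially the same route as the paper's: both parts come from comparing the degrees of the two summands $\rho_k\hat{\rho}_k$ and $\hat{\rho}_k'\kappa_{1\d}\prod_{j=1}^{n_k}(\xi-\xi_{kj})$ in \eqref{eqn:prop3b2}, with the borderline case $\deg(\kappa_{1\d})+n_k=\deg(\rho_k)+1$ settled exactly as in the paper by the leading-coefficient computation $C_3(\rho_{k0}+\deg(\hat{\rho}_k))\neq 0$ under condition (iib). The only cosmetic difference is that you prove part (i) in contrapositive form ($\hat{n}=0\Rightarrow\kappa_{k\n}=C_3\rho_k\Rightarrow\bar{n}=0$), whereas the paper argues by contradiction.
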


\begin{proof}
Suppose that $\bar{n}>0$.
Then $\deg(\kappa_{k\n})>\deg(\rho_k)$.
However, if $\hat{n}=0$,
 then the degree of the right hand side in \eqref{eqn:prop3b2} is $\deg(\rho_k)$.
This is a contradiction.
Thus, we obtain part (i).

Suppose that $\bar{n}=0$.
Then $\deg(\kappa_{k\n})\le\deg(\rho_k)$.
If $\hat{n}>0$ and $\deg(\kappa_{1\d})+n_k\neq\deg(\rho_k)+1$,
 then the degree of the right hand side in \eqref{eqn:prop3b2} becomes
\[
\deg(\kappa_{1\d})+n_k+\deg(\hat{\rho}_k)-1
\quad\mbox{or}\quad
\deg(\rho_k)+\deg(\hat{\rho}_k),
\]
depending on whether $\deg(\kappa_{1\d})+n_k>\deg(\rho_k)+1$ or not,
 so that $\deg(\kappa_{k\n})>\deg(\rho_k)$ for both cases.
On the other hand, if $\hat{n}>0$,
 $-\rho_{k0}\notin\Nset$ and $\deg(\kappa_{1\d})+n_k=\deg(\rho_k)+1$,
 then the leading coefficient of the right hand side in \eqref{eqn:prop3b2} is
\[
C_3(\rho_{k0}+\deg(\hat{\rho}_k))\neq 0,
\]
so that its degree becomes $\deg(\rho_k)+\deg(\hat{\rho}_k)>\deg(\rho_k)$.
Thus, we have a contradiction if condition (iia) or (iib) holds.
So we obtain part (ii).
\end{proof}

We return to the proof of Proposition~\ref{prop:3b}.
Suppose that $\bar{n}>0$ and
\[
\deg(\kappa_{1\d})+n_k<\deg(\rho_k)-\deg(\bar{\rho}_k)+1.
\]
Then we have
\[
\deg(\rho_k)>\deg(\kappa_{1\d})+n_k-1,
\]
so that $\deg(\kappa_{k\n})=\deg(\rho_k)+\deg(\hat{\rho}_k)$ by \eqref{eqn:prop3b2}.
Moreover, by Lemma~\ref{lem:3d}(i), $\hat{n}>0$ and consequently $\deg(\hat{\rho}_k)>0$.
Since $\deg(\kappa_{k\n})=\deg(\rho_k)+\deg(\bar{\rho}_k)$ by definition,
 we have $\deg(\bar{\rho}_k)=\deg(\hat{\rho}_k)$, so that
\[
\deg(\kappa_{1\d})+\deg(\hat{\rho}_k)+n_k-1<\deg(\rho_k).
\]
Hence, it follows from \eqref{eqn:prop3b2} that
 when $\kappa_{k\n}(\xi)$ is divided by $\rho_k(\xi)$,
 the quotient $\bar{\rho}_k(\xi)\not\equiv 0$ is equivalent to $\hat{\rho}_k(\xi)$
 and given by \eqref{eqn:brhok}
 with $\bar{C}=C_3$, $\bar{\xi}_j=\hat{\xi}_j$, $\bar{a}_j=\hat{a}_j$ and $\bar{n}=\hat{n}$, 
 and the remainder becomes
\[
\tilde{\rho}_k(\xi)=\kappa_{1\d}(\xi)\bar{\rho}_k'(\xi)\prod_{j=1}^{n_k}(\xi-\xi_{kj}).
\]
Thus, condition~(vi) does not occur.

If $\bar{n}=0$ and condition~(ivb) holds,
 then by Lemma~\ref{lem:3d}(ii) $\hat{n}=0$,
 so that by \eqref{eqn:prop3b2}
 $\bar{\rho}_k(\xi)\equiv C_3\neq 0$ and $\tilde{\rho}_k(\xi)\equiv 0$,
 i.e., condition~(iva) does not hold.
Hence, condition~(iv) does not occur.
If $\bar{n}>0$ and $\deg(\kappa_{1\d})+n_k>\deg(\rho_k)+1$,
 then by \eqref{eqn:prop3b2}
\[
\deg(\kappa_{k\n})\ge\deg(\kappa_{1\d})+n_k
\]
since $\hat{n}>0$ by Lemma~\ref{lem:3d}(i).
Hence, condition~(v) does not occur.
We complete the proof.
\end{proof}

\begin{remark}
\label{rmk:3c}
Suppose that $\kappa_{1\n}(\xi),\kappa_{k\n}(\xi)\not\equiv 0$;
 $\kappa_{k\n}(\xi_{1j})\neq 0$, $a_{kj}\ge 2$
 and the zero $\xi_{1j}$ of $\bar{\kappa}_{kb}(\xi)$ is simple when $b_j>1$ for $j=1,\ldots,n_1$;
 and Eq.~\eqref{eqn:prop3b} has a polynomial solution of the form \eqref{eqn:polsol}
 such that $\hat{\xi}_j\neq\xi_{1\ell_1},\xi_{k\ell_k}$
 for any $j=1,\ldots,\bar{n}$, $\ell_1=1,\ldots,n_1$ and $\ell_k=1,\ldots,n_k$.
Then from the above proof we see that
\[
\frac{\theta_k'(\xi)}{\theta_k(\xi)}
 =(k-1)\kappa_1(\xi)-\sum_{j=1}^{n_1}\frac{a_{1j}}{\xi-\xi_{1j}}
 -\sum_{j=1}^{n_k}\frac{a_{kj}-1}{\xi-\xi_{kj}}
 +\sum_{j=1}^{\hat{n}}\frac{\hat{a}_j}{\xi-\hat{\xi}_j}\in\Cset(\xi).
\]
Obviously, condition~{\rm(H2)} does not hold.
\end{remark}

\section{Proof of Theorem~\ref{thm:fH1}}

We begin with Theorem~$\ref{thm:fH1}$
 for the unfolding \eqref{eqn:fH} of fold-Hopf bifurcations.

\begin{proof}[Proof of Theorem~$\ref{thm:fH1}$]
Based on Corollary~\ref{cor:fH},
 we prove the meromorphic nonintegrability of \eqref{eqn:fHp0} near the $x_3$-plane.
We set $\xi=x_3$ and $\eta=r$ and apply Theorem~\ref{thm:c} to \eqref{eqn:fHp0}
 with assistance of Proposition~\ref{prop:3b}.
Hence, we now only have to check
 $\deg(\kappa_{1\d})>\deg(\kappa_{1\n})$,
 condition~(H1) and the hypotheses of Proposition~\ref{prop:3b}.

Eq.~\eqref{eqn:fol} becomes
\begin{equation}\label{eqn:folFH}
r'=\frac{r(\alpha x_3+\nu)}{x_3^2+sr^2+\mu},
\end{equation}
where the prime represents differentiation with respect to $x_3$.
We take $r=0$ as the integral curve, i.e., $\varphi(x_3)=0$,
 and compute \eqref{eqn:kappa} as
\begin{equation}
\kappa_{2j-1}(x_3)=(2j-1)!\frac{(-s)^{j-1}(\alpha x_3+\nu)}{(x_3^2+\mu)^j},\quad
\kappa_{2j}(x_3)=0,\quad
j\in\Nset.
\label{eqn:kappafH}
\end{equation}
Recall that $s=\pm 1$.

We first consider the case of $\mu\neq 0$.
In addition, assume  that $\alpha\not\in\Qset$ or $\nu/\sqrt{-\mu}\not\in\Qset$.
Replacing $r$, $x_3$ and $\nu$
 with $\sqrt{-\mu}\,r$, $\sqrt{-\mu}\,x_3$ and $\sqrt{-\mu}\,\nu$, respectively,
 we take $\mu=-1$ and have $\alpha$ or $\nu\not\in\Qset$.
From \eqref{eqn:kappafH} we easily see
 that $\deg(\kappa_{1\d})>\deg(\kappa_{1\n})$ and compute
\[
\Omega(x_3)=\exp\left(\int\frac{\alpha x_3+\nu}{x_3^2-1}\d x_3\right)
 =(x_3+1)^{(\alpha-\nu)/2}(x_3-1)^{(\alpha+\nu)/2},
\]
so that condition~(H1) holds
 since $\alpha-\nu\not\in\Qset$ or $\alpha+\nu\not\in\Qset$.
We now only have to check the hypotheses of Proposition~\ref{prop:3b}.

Let $k=2j-1$ for $j\ge 2$.
Assume that $\alpha\neq\pm\nu$.
Then by \eqref{eqn:kappafH}
\begin{align*}
&
\kappa_{1\n}(x_3)=\alpha x_3+\nu\not\equiv 0,\quad
\kappa_{2j-1,\n}(x_3)=(2j-1)!(-s)^{j-1}(\alpha x_3+\nu)\not\equiv 0,\\
&
\kappa_{1\d}(x_3)=(x_3+1)(x_3-1),\quad
\kappa_{2j-1,\d}(x_3)=\kappa_{1\d}(x_3)(x_3+1)^{j-1}(x_3-1)^{j-1},
\end{align*}
from which $n_1=2$, $\xi_{11}=-1$, $\xi_{12}=1$, $a_{11}=a_{12}=j-1$ and $n_{2j-1}=0$.
We compute \eqref{eqn:bkappa} as
\[
\bar{\kappa}_{2j-1,b}(x_3)=(x_3+1)(x_3-1)((2(j-1)(\alpha-1)-b_1-b_2+2)x_3+2(j-1)\nu+b_1-b_2),
\]
where $b=(b_1,b_2)\in\Nset^2$.
If and only if
\begin{equation}
\alpha-\nu\neq\frac{b_1-1}{j-1}+1\quad\left(\mbox{resp.}\quad
\alpha+\nu\neq\frac{b_2-1}{j-1}+1\right),
\label{eqn:thmfH1a}
\end{equation}
then the zero $x_3=-1$ (resp. $x_3=1$) is simple for $\bar{\kappa}_{2j-1,b}(x_3)$.
Hence, if $\alpha=\nu+1$ and $\alpha=-\nu+1$, respectively,
 then the zeros $x_3=-1$ and $x_3=1$ of $\bar{\kappa}_{2j-1}(x_3)$ are double
 as well as the zeros $x_3=1$ and $x_3=-1$ of $\bar{\kappa}_{2j-1,b}(x_3)$
 are simple for any $b\in\Nset^2$.
Thus, condition~(ii) of Proposition~\ref{prop:3b} holds.

Additionally, suppose that $\alpha\neq\pm\nu+1$.
Then for some $j>1$ both conditions in \eqref{eqn:thmfH1a} hold,
 so that the zeros $x_3=\pm 1$ of $\bar{\kappa}_{2j-1,b}(\xi)$ are simple
 for any $b\in\Nset^2$ even if $\alpha$ or $\nu\in\Qset$.
Eq.~\eqref{eqn:rhok} becomes
\[
\rho_{2j-1}(x_3)=2(j-1)((\alpha-1)x_3+\nu).
\]
We see that
 $\bar{n}=0$, $\deg(\kappa_{1\d})+n_{2j-1}=\deg(\rho_{2j-1})+1=2$ and
\[
\bar{\rho}_{2j-1}(x_3)=\frac{(2j-1)!(-s)^{j-1}\alpha}{2(j-1)(\alpha-1)},\quad
\tilde{\rho}_{2j-1}(x_3)=-\frac{(2j-1)!(-s)^{j-1}\nu}{\alpha-1}
\]
if $\alpha\neq 1$, and that $\bar{n}=1$ and
\[
\deg(\kappa_{1\d})+n_{2j-1}=2>\max(\deg(\kappa_{2j-1,\n}),\deg(\rho_{2j-1})+1)=1
\]
if $\alpha=1$.
So condition~(iv) or (v) of Proposition~\ref{prop:3b} holds,
 depending on whether $\alpha\neq 1$ or not,
 where the condition
\begin{equation}
-\rho_{2j-1,0}=-2(j-1)(\alpha-1)\not\in\Nset,
\label{eqn:thmfH1b}
\end{equation}
which holds for some $j>1$ if $2\alpha-1\not\in\Zset_{\le 0}$,
 is required as well as $\nu\neq 0$ for the former.
If $\alpha\in\Qset$, then $\alpha\pm\nu\not\in\Qset$
 and that if $\alpha-\nu$ or $\alpha+\nu\in\Qset$, then $\alpha\not\in\Qset$,
 since $\alpha$ or $\nu\not\in\Qset$.
Hence, if $2\alpha-1\notin\Zset_{\le 0}$, then one can take $j>1$
 for which conditions \eqref{eqn:thmfH1a} and \eqref{eqn:thmfH1b} hold simultaneously.

We next assume that $\alpha=\nu$ or $-\nu$ and $\alpha,\nu\neq 0$. 
By \eqref{eqn:kappafH}
\begin{align*}
&
\kappa_{1\n}(x_3)=\alpha\neq 0,\quad
\kappa_{2j-1,\n}(x_3)=(2j-1)!(-s)^{j-1}\alpha\neq 0,\\
&
\kappa_{1\d}(x_3)=x_3\mp 1,\quad
\kappa_{2j-1,\d}(x_3)=\kappa_{1\d}(x_3)(x_3+1)^{j-1}(x_3-1)^{j-1},
\end{align*}
from which $n_1=1$, $\xi_{11}=\pm 1$, $a_{11}=j-1$,
 $n_{2j-1}=1$, $\xi_{2j-1,1}=\mp 1$ and $a_{2j-1,1}=j-1$,
 where the upper and lower signs are taken for $\alpha=\nu$ and $-\nu$, respectively.
So we see that condition~(i) of Proposition~\ref{prop:3b} holds for $j=2$.
Thus we obtain the desired result for $\mu\neq 0$.

We turn to the case of $\mu=0$.
Let $\mu=0$ and let $\alpha\not\in\Qset$ or $\nu\neq 0$.
From \eqref{eqn:kappafH} we easily see
 that $\deg(\kappa_{1\d})>\deg(\kappa_{1\n})$ and compute
\[
\Omega(x_3)=\exp\left(\int\frac{\alpha x_3+\nu}{x_3^2}\d x_3\right)
 =x_3^\alpha e^{-\nu/x_3},
\]
so that condition~(H1) holds.
We check the hypotheses of Proposition~\ref{prop:3b} for $\alpha,\nu\neq 0$.

Let $k=3$.
Assume that $\alpha,\nu\neq 0$.
Then by \eqref{eqn:kappafH}
\begin{align*}
&
\kappa_{1\n}(x_3)=\alpha x_3+\nu\not\equiv 0,\quad
\kappa_{3\n}(x_3)=-6s(\alpha x_3+\nu),\\
&
\kappa_{1\d}(x_3)=x_3^2,\quad
\kappa_{3\d}(x_3)=\kappa_{1\d}(x_3)x_3^2,
\end{align*}
from which $n_1=1$, $\xi_{11}=0$, $a_{11}=2$ and $n_3=0$.
We compute \eqref{eqn:bkappa} as
\[
\bar{\kappa}_{3b}(\xi)=((2(\alpha-1)-b+1)x_3+\nu)x_3,
\]
where $b\in\Nset$,
 so that the zero $x_3=0$ is simple for $\bar{\kappa}_{3b}(\xi)$ with any $b\in\Nset$.
 Eq.~\eqref{eqn:rhok} becomes
\[
\rho_{2j-1}(x_3)=2((\alpha-1)x_3+\nu).
\]
We see that
 $\bar{n}=0$, $\deg(\kappa_{1\d})+n_3=\deg(\rho_3)+1=2$ and 
\[
\bar{\rho}_3(x_3)=\frac{-6s\alpha}{2(\alpha-1)},\quad
\tilde{\rho}_3(x_3)
 =\frac{6s\nu}{\alpha-1}\not\equiv 0
\]
if $\alpha\neq 1$,
 and that $\bar{n}=1$ and
\[
\deg(\kappa_{1\d})+n_3=2>\max(\deg(\kappa_{3\n}),\deg(\rho_3))=1
\]
if $\alpha=1$.
So condition~(iv) or (v) of Proposition~\ref{prop:3b} holds,
 depending on whether $\alpha\neq 1$ or not,
 where condition \eqref{eqn:thmfH1b} is required for the former.
Thus, we complete the proof.
\end{proof}


\section{Proofs of Theorems~\ref{thm:dH1} and \ref{thm:dH2}}

We now turn to the unfolding \eqref{eqn:dH} of double-Hopf bifurcations
 and reduce the problem to \eqref{eqn:dHp0} based on Corollary~\ref{cor:dH},
 as in Section~4.
We set $(\xi,\eta)=(r_2,r_1)$ or $(r_1,r_2)$
 and apply Theorem~\ref{thm:c} to \eqref{eqn:dHp0}
 with assistance of Proposition~\ref{prop:3b} in a similar way
 as in the proof of Theorem~\ref{thm:fH1}.
Eq.~\eqref{eqn:fol} becomes
\begin{equation}\label{eqn:dH2}
\frac{\d r_1}{\d r_2}=\frac{r_1(s r_1^2+\alpha r_2^2+\nu)}{r_2(\beta r_1^2-r_2^2+\mu)}
\end{equation}
and
\begin{equation}\label{eqn:dH1}
\frac{\d r_2}{\d r_1}=\frac{r_2(\beta r_1^2-r_2^2+\mu)}{r_1(s r_1^2+\alpha r_2^2+\nu)}
\end{equation}
for $(\xi,\eta)=(r_2,r_1)$ and $(r_1,r_2)$, respectively.
Recall that $s=\pm 1$.

\begin{proof}[Proof of Theorem~$\ref{thm:dH1}$]
We consider \eqref{eqn:dH2}
 and take $r_1=0$ as the integral curve, i.e., $\varphi(r_2)=0$.
We compute \eqref{eqn:kappa} as
\begin{equation}\label{eqn:kappadH2}
\begin{split}
&
\kappa_1(r_2)=-\frac{\alpha r_2^2+\nu}{r_2(r_2^2-\mu)},\quad
\kappa_{2j}(r_2)=0,\\
&
\kappa_{2j+1}(r_2)=-(2j+1)!\beta^{j-1}
 \frac{(\alpha\beta+s)r_2^2+\beta\nu-\mu s}{r_2(r_2^2-\mu)^{j+1}},\quad
j\in\Nset.
\end{split}
\end{equation}

We begin with the case of $\mu\neq 0$.
Additionally, let $\alpha+\nu/\mu\not\in\Qset$ or $\nu/\mu\not\in\Qset$.
Replacing $r_1$, $r_2$ and $\nu$
 with $\sqrt{\mu}\,r_1$, $\sqrt{\mu}\,r_2$ and $\mu\nu$, respectively,
 we take $\mu=1$ and have $\alpha+\nu\not\in\Qset$ or $\nu\not\in\Qset$.
We easily see by \eqref{eqn:kappadH2} that $\deg(\kappa_{1\d})>\deg(\kappa_{1\n})$
 and compute
\[
\Omega(r_2)=\exp\left(\int-\frac{\alpha r_2^2+\nu}{r_2(r_2^2-1)}\d r_2\right)
 =r_2^\nu(r_2^2-1)^{-(\alpha+\nu)/2},
\]
so that condition~(H1) holds.
If $\nu\not\in\Qset$, $\alpha\not\in\Zset_{\ge 0}$
 and $\beta\nu-s,(\alpha+\nu)s-(\beta\nu-s)\neq 0$,
 then $\alpha+\nu,\beta\neq 0$;
 $\alpha+\nu=0$;
 or $\beta=0$ and $\alpha+\nu\neq 0$,
 as well as $\alpha,\nu\neq 0$.
On the other hand, if $\alpha+\nu\not\in\Qset$, $\alpha\not\in\Zset_{\ge 0}$
 and $\beta\nu-s,(\alpha+\nu)s-(\beta\nu-s)\neq 0$,
 then $\nu,\beta\neq 0$; $\nu=0$ and $\alpha\neq -1$; or $\beta=0$,
 as well as $\alpha,\alpha+\nu\neq 0$.
In the following,
 we check the hypotheses of Proposition~\ref{prop:3b}
 for $\alpha+\nu,\alpha,\nu,\beta,\beta\nu-s\neq 0$,
 for $\alpha+\nu=0$ and $\nu,\beta\nu-s\neq 0$,
 for $\nu=0$ and $\alpha\neq 0$,
 and for $\beta=0$ and $\alpha+\nu,\alpha,\nu\neq 0$.
Here $\beta$ may be zero in the second and third cases,
 and $\beta\nu-s\neq 0$ also holds in the latter two cases.

Let $k=2j+1$ for $j\ge 1$.
We first assume that $\alpha+\nu,\alpha,\nu,\beta,\beta\nu-s\neq 0$.
Then by \eqref{eqn:kappadH2}
\begin{align*}
&
\kappa_{1\n}(r_2)=-(\alpha r_2^2+\nu),\quad
\kappa_{2j+1,\n}(r_2)
 =-(2j+1)!\beta^{j-1}((\alpha\beta+s)r_2^2+\beta\nu-s),\\
&
\kappa_{1\d}(r_2)=r_2(r_2+1)(r_2-1),\quad
\kappa_{2j+1,\d}(r_2)=\kappa_{1\d}(r_2)(r_2+1)^j(r_2-1)^j,
\end{align*}
from which $n_1=2$, $\xi_{11}=-1$, $\xi_{2j+1,1}=-1$,  $a_{11}=a_{12}=j$ and $n_{2j+1}=0$,
 since $(\alpha\beta+s)+(\beta\nu-s)=(\alpha+\nu)\beta\neq 0$.
We also compute \eqref{eqn:bkappa} as
\[
\bar{\kappa}_{2j+1,b}(r_2)
 =-(r_2+1)(r_2-1)((2j(\alpha+1)+b_1+b_2-2)r_2^2-(b_1-b_2)r_2+2j\nu)
\]
with $b=(b_1,b_2)\in\Nset$.
If and only if
\begin{equation}
\alpha+\nu\neq\frac{1-b_1}{j}-1\quad\left(\mbox{resp.}\quad
\frac{1-b_2}{j}-1\right),
\label{eqn:thmdH1a}
\end{equation}
then the zero $r_2=-1$ (resp. $r_2=1$) is simple for $\bar{\kappa}_{2j+1,b}(r_2)$.
Hence, if $\alpha+\nu=-1$,
 then the zeros $r_2=\pm 1$ of $\bar{\kappa}_{2j+1}(r_2)$ are double,
 so that condition~(ii) of Proposition~\ref{prop:3b} holds.
Note that if $\alpha+\nu=-1$ for $\nu\not\in\Qset$,
 then $\alpha\not\in\Qset\supset\Zset_{\ge 0}$
 and $(\alpha+\nu)s-(\beta\nu-s)=-\beta\nu\neq 0$.

Suppose that $\alpha+\nu\neq -1$.
If $\alpha+\nu>-1$ or $\alpha+\nu+2\not\in\Zset_{\le 0}$,
 then there exists an integer $j>0$
 such that both conditions in \eqref{eqn:thmdH1a} hold, i.e.,
 the zeros $r_2=\pm 1$ of $\bar{\kappa}_{2j+1,b}(r_2)$ are simple, for any $b\in\Nset^2$.
Eq.~\eqref{eqn:rhok} becomes
\[
\rho_{2j+1}(r_2)=-2j((\alpha+1)r_2^2+\nu).
\]
If $\alpha\neq -1$,
 then $\bar{n}=0$, $\deg(\kappa_{1\d})+n_{2j+1}=\deg(\rho_{2j+1})+1=3$ and
\begin{align*}
&
\bar{\rho}_{2j+1}(r_2)
 =\frac{(2j+1)!\,\beta^{j-1}(\alpha\beta+s)}{2j(\alpha+1)},\\
&
\tilde{\rho}_{2j+1}(r_2)
 =\frac{(2j+1)!\,\beta^{j-1}((\alpha+\nu)s-(\beta\nu-s))}{\alpha+1}.
\end{align*}
If $\alpha=-1$ and $(\alpha+\nu)s-(\beta\nu-s)\neq 0$, i.e., $\beta\neq s$,
 then $\bar{n}=2$ and
\[
\deg(\kappa_{1\d})+n_{2j+1}=3>\max(\deg(\kappa_{2j+1,\n}),\deg(\rho_{2j+1})+1)=2.
\]
Noting that $(\alpha+\nu)s-(\beta\nu-s)=-(\alpha+\nu)(\alpha+1)\beta$ when $\alpha\beta+s=0$,
 we see that condition~(iv) or (v) of Proposition~\ref{prop:3b} holds
 if $(\alpha+\nu)s-(\beta\nu-s)\neq 0$,
 depending on whether $\alpha\neq -1$ or not, where the condition
\begin{equation}
-\rho_{2j+1,0}=2j(\alpha+1)\notin\Nset,
\label{eqn:thmdH1b}
\end{equation}
which holds for some $j\in\Nset$ if $2(\alpha+1)\notin\Nset$, is required for the former.
Note that if $2(\alpha+1)\notin\Nset$, then 
 conditions \eqref{eqn:thmdH1a} and \eqref{eqn:thmdH1b} hold simultaneously
 for some $j\in\Nset$.
Moreover, if $2(\alpha+1)$ is a positive odd number,
 then $\alpha\notin\Zset_{\ge 0}$ and condition~(iii) holds for $k=2j+1$
 when $j>0$ is an odd number.
Actually, if $2j(\alpha+1)=2\ell-1$, $\ell\in\Nset$,
 and Eq.~\eqref{eqn:prop3b} has a polynomial solution,
 then it has the form
\[
z=\sum_{i=1}^\ell z_ir_2^{2i-1},\quad z_1,\ldots,z_\ell\in\Cset,
\]
but never satisfies \eqref{eqn:prop3b}
 since the left hand side of \eqref{eqn:prop3b} has no even-order monomial.
Thus, under our present assumptions,
 condition~(H2) holds if $\alpha\not\in\Zset_{\ge 0}$,
 $\alpha+\nu+2\not\in\Zset_{\le 0}$  and $(\alpha+\nu)s-(\beta\nu-s)\neq 0$.

We next assume that $\nu,\beta\nu-s\neq 0$ but $\alpha+\nu=0$.
Then $\alpha\neq 0$.
By \eqref{eqn:kappadH2}
\begin{align*}
&
\kappa_{1\n}(r_2)=-\alpha,\quad
\kappa_{2j+1,\n}(r_2)=(2j+1)!\beta^{j-1}(\beta\nu-s),\\
&
\kappa_{1\d}(r_2)=r_2,\quad
\kappa_{2j+1,\d}(r_2)=\kappa_{1\d}(r_2)(r_2+1)^j(r_2-1)^j,
\end{align*}
from which $n_{2j+1}=2$, $\xi_{2j+1,1}=-1$, $\xi_{2j+1,1}=1$, $a_{2j+1,1}=a_{2j+1,2}=j$
 and $n_1=0$.
Since $\beta\nu-s\neq 0$,
 we have $\kappa_{2j+1,\n}(r_2)\not\equiv 0$
 and condition~(i) of Proposition~\ref{prop:3b} holds for $j=1$ even if $\beta=0$.

We next assume that $\alpha\neq 0$ but $\nu=0$.
Then $(\alpha+\nu)s-(\beta\nu-s)=(\alpha+1)s$.
If $\beta\neq 0$, then by \eqref{eqn:kappadH2}
\begin{align*}
&
\kappa_{1\n}(r_2)=-\alpha r_2,\quad
\kappa_{2j+1,\n}(r_2)=-(2j+1)!\beta^{j-1}((\alpha\beta+s)r_2^2-s),\\
&
\kappa_{1\d}(r_2)=(r_2+1)(r_2-1),\quad
\kappa_{2j+1,\d}(r_2)=\kappa_{1\d}(r_2)r_2(r_2+1)^j(r_2-1)^j,
\end{align*}
from which $n_1=2$, $\xi_{11}=-1$, $\xi_{12}=1$, $a_{11}=a_{12}=j$,
 $n_{2j+1}=1$, $\xi_{2j+1,1}=0$ and $a_{2j+1,1}=1$,
 so that condition~(i) of Proposition~\ref{prop:3b} holds.
If $\beta=0$, then
\begin{align*}
&
\kappa_{1\n}(r_2)=-\alpha r_2,\quad
\kappa_{3\n}(r_2)=-6s,\\
&
\kappa_{1\d}(r_2)=(r_2+1)(r_2-1),\quad
\kappa_{3\d}(r_2)=\kappa_{1\d}(r_2)r_2,
\end{align*}
from which $n_3=1$, $\xi_{31}=0$, $a_{31}=1$ and $n_1=0$,
 so that condition~(i) of Proposition~\ref{prop:3b} holds.
Note that $\kappa_{2j+1,\n}(r_2)\equiv 0$ for $j>1$ when $\beta=0$.

We finally assume that $\alpha+\nu,\alpha,\nu\neq 0$ but $\beta=0$.
By \eqref{eqn:kappadH2}
\begin{align*}
&
\kappa_{1\n}(r_2)=-(\alpha r_2^2+\nu),\quad
\kappa_{3\n}(r_2)=-6s,\\
&
\kappa_{1\d}(r_2)=r_2(r_2+1)(r_2-1),\quad
\kappa_{3\d}(r_2)=\kappa_{1\d}(r_2),
\end{align*}
from which $n_1=n_3=0$.
Eq.~\eqref{eqn:rhok} becomes
\[
\rho_3(r_2)=-2(\alpha r_2^2+\nu).
\]
If $2\alpha\notin\Nset$, then $\bar{n}=0$, $\bar{\rho}_3(r_2)\equiv 0$,
 $\tilde{\rho}_3(r_2)\equiv-6s\neq 0$
 and $-\rho_{30}=2\alpha\notin\Nset$,
 so that condition~(iv) of Proposition~\ref{prop:3b} holds.
Note that $\deg(\kappa_{1\d})+n_3=\deg(\rho_3)+1=3$.
Moreover, if $2\alpha$ is a positive odd number,
 then $\alpha\notin\Zset_{\ge 0}$ and condition~(iii) of Proposition~\ref{prop:3b}
 holds for  $k=3$,
 as in the above argument for the first case with $\alpha+\nu\neq -1$.
Thus, we obtain the desired result for $\mu\neq 0$.

We turn to the case of $\mu=0$.
Let $\mu=0$ and let $\alpha\not\in\Qset$ or $\nu\neq 0$.
We easily see by \eqref{eqn:kappadH2} that $\deg(\kappa_{1\d})>\deg(\kappa_{1\n})$
 and compute
\[
\Omega(r_2)=\exp\left(\int\frac{\alpha r_2^2+\nu}{r_2^3}\d r_2\right)
 =r_2^{\alpha}e^{-\nu/2r_2^2},
\]
so that condition~(H1) holds.
We check the hypotheses of Proposition~\ref{prop:3b}
 for $\alpha,\beta,\nu\neq 0$ and for $\beta=0$ and $\alpha,\nu\neq 0$.

Let $k=3$.
Assume that $\alpha,\beta,\nu\neq 0$.
Then by \eqref{eqn:kappadH2}
\begin{align*}
&
\kappa_{1\n}(r_2)=-(\alpha r_2^2+\nu),\quad
\kappa_{3\n}(r_2)
 =-6((\alpha\beta+s)r_2^2+\beta\nu),\\
&
\kappa_{1\d}(r_2)=r_2^3,\quad
\kappa_{3\d}(r_2)=\kappa_{1\d}(r_2)r_2^2,
\end{align*}
from which $n_1=1$, $\xi_{11}=0$, $a_{11}=2$ and $n_3=0$.
Eqs.~\eqref{eqn:bkappa} and \eqref{eqn:rhok} become
\[
\bar{\kappa}_{3b}(r_2)=-r_2((2(\alpha+1)+b-1)r_2^2+2\nu),\quad
\rho_3(r_2)=-2((\alpha+1)r_2^2+\nu),
\]
where $b\in\Nset$.
The zero $r_2=0$ of $\bar{\kappa}_{3b}(r_2)$ is simple for any $b\in\Nset$.
Suppose that $\alpha\neq -1$.
Then $\deg(\kappa_{1\d})+n_3=\deg(\rho_3)+1=3$.
If $\alpha\beta+s\neq 0$,
 then $\bar{n}=0$ and
\[
\bar{\rho}_3(r_2)=\frac{3(\alpha\beta+s)}{(\alpha+1)},\quad
\tilde{\rho}_3(r_2)
 =\frac{6(\beta-s)}{\alpha+1},
\]
and if $\alpha\beta+s=0$,
 then $\bar{n}=0$, $\bar{\rho}_3(r_2)\equiv 0$
 and $\tilde{\rho}_3(r_2)\equiv-6\beta\nu\neq 0$.
On the other hand, suppose that $\alpha=-1$.
If $\alpha\beta+s=-(\beta-s)\neq 0$, then $\bar{n}=2$ and
\[
\deg(\kappa_{1\d})+n_{2j+1}=3>\max(\deg(\kappa_{2j+1,\n}),\deg(\rho_{2j+1})+1)=2.
\]
Thus, we see that condition~(iv) or (v) of Proposition~\ref{prop:3b} holds if $\beta\neq s$,
 depending on whether $\alpha\neq -1$ or not,
 where the condition $\alpha\not\in\Nset$,
 which follows from $-\rho_{30}=2(\alpha+1)$ as in the above argument,
 is required for the former.

We {finally} assume that $\alpha,\nu\neq 0$ but $\beta=0$.
By \eqref{eqn:kappadH2}
\begin{align*}
&
\kappa_{1\n}(r_2)=-(\alpha r_2^2+\nu),\quad
\kappa_{3\n}(r_2)=-6s,\\
&
\kappa_{1\d}(r_2)=r_2^3,\quad
\kappa_{3\d}(r_2)=\kappa_{1\d}(r_2),
\end{align*}
from which $n_1=n_{2j+1}=0$.
Eq.~\eqref{eqn:rhok} becomes
\[
\rho_3(r_2)=-2(\alpha r_2^2+\nu),
\]
so that $\bar{n}=0$, $\deg(\kappa_{1\d})+n_{2j+1}=\deg(\rho_{2j+1})+1=3$ and
\[
\bar{\rho}_3(r_2)\equiv 0,\quad
\tilde{\rho}_3(r_2)\equiv -6s\neq 0.
\]
Hence,
 then conditions~(iv) and (iii) of Proposition~\ref{prop:3b} holds
 if $-\rho_{30}=2\alpha\not\in\Nset$ and $2\alpha$ is an odd number, respectively.
 Thus, we complete the proof.
\end{proof}

\begin{proof}[Proof of Theorem~$\ref{thm:dH2}$]
We consider \eqref{eqn:dH1} and take $r_2=0$ as the integral curve,
 i.e., $\varphi(r_1)=0$.
Replacing $r_1$ with $\sqrt{-s}\,r_1$,
 we rewrite \eqref{eqn:dH1} as
\begin{equation}
\frac{\d r_2}{\d r_1}
 =\frac{r_2(-r_2^2-\beta sr_1^2+\mu)}{r_1(\alpha r_2^2-r_1^2+\nu)},
\label{eqn:dH1r}
\end{equation}
which has the form of \eqref{eqn:dH2} with $s=-1$.
Applying Theorem~\ref{thm:dH1} to \eqref{eqn:dH1r},
 we easily obtain the desired result.
\end{proof}

\section*{Acknowledgments}
The authors acknowledge support from Japan Society for the Promotion of Science (JSPS)
 Fellowship, which enables one of the authors (P.A.) to stay in Kyoto
 as a JSPS International Fellow (ID Number S17113)
 and to make collaboration with the other (K.Y.).
They thank Andrzej J. Maciejewski for his helpful comments,
 and one of the anonymous referees for pointing out errors in the original manuscript.
K.Y. also appreciates support by JSPS Kakenhi Grant Number JP17H02859.

\end{document}